\numberwithin{equation}{section}
\theoremstyle{plain}
\DeclareMathAlphabet{\pazocal}{OMS}{zplm}{m}{n}
\newtheorem{theorem}{Theorem} [section]
\newtheorem{lemma}{Lemma}[section]
\newtheorem{proposition}{Proposition}[section]
\newtheorem{cor}{Corollary}[section]
\newtheorem{definition}{Definition}
\newtheorem{example}{Example}
\theoremstyle{remark}
\newtheorem{remark}{Remark}[section]
\begin{document}

\title[The Wasserstein distance for Ricci shrinkers]{The Wasserstein distance for Ricci shrinkers}



\thanks{F. Conrado was partially supported by CNPq/Brazil [Grant: 151430/2022-5].}
\thanks{D. Zhou was partially supported by CNPq/Brazil [Grant: 305364/2019-7, 403344/2021-2] and FAPERJ/ Brazil [Grant: E-26/200.386/2023, E-26/210.120/2020].}

\address{Departamento de Matem\'atica, Universidade Federal de Sergipe, Jardim Rosa Elze, S\~ao Cristóvão, SE  49100-000, Brazil}
\author[Franciele Conrado]{Franciele Conrado}

\email{franciele@mat.ufs.br}
\author[Detang Zhou]{Detang Zhou}
\address{Departamento de Geometria, Instituto de Matem\'atica e Estat\'\i stica, Universidade Federal Fluminense, S\~ao Domingos,
Niter\'oi, RJ 24210-201, Brazil}
\email{zhoud@id.uff.br}

\newcommand{\M}{\mathcal M}

\begin{abstract} Let $(M^n,g,f)$ be a Ricci shrinker such that $\textrm{Ric}_f=\frac{1}{2}g$ and the measure induced by the weighted volume element $(4\pi)^{-\frac{n}{2}}e^{-f}dv_{g}$ is a probability measure. Given a point $p\in M$, we consider two probability measures defined in the tangent space $T_pM$, namely the Gaussian measure $\gamma$ and the measure $\overline{\nu}$ induced by the exponential map of $M$ to $p$. In this paper, we prove a result that provides an upper estimate for the Wasserstein distance with respect to the Euclidean metric $g_0$ between the measures $\overline{\nu}$ and $\gamma$, and which also elucidates the rigidity implications resulting from this estimate.
\end{abstract}

\maketitle
\section{Introduction}

\medskip

A \textit{complete smooth metric measure space} $(M^n, g, f)$ refers to a complete $n$-dimensional Riemannian manifold $(M^n, g)$ accompanied by a smooth function $f$ defined on $M$. The \textit{Bakry-Émery Ricci curvature} of a complete smooth metric measure space $(M^n, g, f)$ is defined as:
$$\textrm{Ric}_f=\textrm{Ric}_g+\nabla^2f$$
\noindent where $\nabla^2 f$ represents the Hessian of $f$ and $\textrm{Ric}_g$ represents the Ricci curvature of $(M, g)$. A complete smooth metric measure space $(M^n, g, f)$ is referred to as a \textit{shrinking gradient Ricci soliton} or simply a \textit{Ricci shrinker} if:
$$\textrm{Ric}_f=\frac{1}{2\tau} g$$
\noindent for some positive constant $\tau$. In this case, the function $f$ is referred to as a \textit{potential function} of the Ricci shrinker.

\medskip

It is worth noting that any Einstein manifold with positive scalar curvature, combined with a constant function, is considered a Ricci shrinker. An important example of a Ricci shrinker is the \textit{Gaussian shrinker} $(\mathbb{R}^n, g_e, \frac{|x|^2}{4})$, where $g_e$ represents the Euclidean metric and the potential function is given by $f(x) = \frac{|x|^2}{4}$ for $x \in \mathbb{R}^n$. Another example of a Ricci shrinker is the \textit{cylinder Ricci shrinker} $(\mathbb{S}^{n-k}(r) \times \mathbb{R}^k, g_c, f_c)$ with the product metric $g_c$ and potential function $f_c(x,y) = \frac{|y|^2}{4}$, where $(x,y) \in \mathbb{S}^{n-k}(r) \times \mathbb{R}^k$. Here, $\mathbb{S}^{n-k}(r)$ denotes the $(n-k)$-dimensional round sphere of radius $r = \sqrt{2(n-k-1)}$, with $k \geq 1$ and $n-k \geq 2$.

\medskip

The study of Ricci shrinkers lies at the intersection of critical metrics and geometric flows, and it has been extensively studied, yielding numerous significant results. In dimensions 2 and 3, all Ricci shrinkers have been completely classified. Hamilton \cite{hamilton} demonstrated that any 2-dimensional Ricci shrinker is either isometric to $\mathbb{R}^2$ or a finite quotient of $\mathbb{S}^2$. Additionally, Cao, Chen, and Zhu \cite{ccz}, Naber \cite{naber}, and Ni and Wallach \cite{ni} established that any 3-dimensional Ricci shrinker is isometric to a finite quotient of $\mathbb{R}^3$, $\mathbb{S}^3$, or $\mathbb{S}^2 \times \mathbb{R}$.

\medskip

In higher dimensions, the classification of gradient shrinking Ricci solitons becomes more complex, especially due to the Weyl tensor's involvement. In the four dimensional case, some classification results have been obtained under specific conditions. Locally conformally flat solitons are finite quotients of $\Bbb{S}^4$, $\Bbb{R}^4$, or $\Bbb{S}^{3}\times\Bbb{R}$. Half-conformally flat solitons are finite quotients of $\Bbb{S}^4$, $\Bbb{CP}^2$, $\Bbb{R}^4$, or $\Bbb{S}^{3}\times\Bbb{R}$. Bach-flat solitons are either Einstein or finite quotients of $\Bbb{R}^4$ or $\Bbb{S}^{3}\times\Bbb{R}$. Under certain vanishing conditions on the Weyl tensor, solitons can be classified as Einstein or finite quotients of specific spaces. Notably, the classification of four-dimensional Ricci shrinkers with constant scalar curvature has recently been fully achieved through the works of Peterson and Wylie \cite{1dim4}, Fernández-López and García-Río \cite{2dim4}, and Cheng and Zhou \cite{3dim4}. These studies have established that any 4-dimensional Ricci shrinker with constant scalar curvature is isometric to one of the following: an Einstein manifold, a finite quotient of $\mathbb{R}^4$, $\mathbb{S}^2\times \mathbb{R}^2$, or $\mathbb{S}^3\times \mathbb{R}$.

\medskip

More partial classification results regarding Ricci shrinkers in higher dimensions can be found in the works mentioned, including \cite{CRZ-dim4}, \cite{cda1}, \cite{cda2}, \cite{cda3}, \cite{cda4}, \cite{cda5}, \cite{ni}, \cite{cda7}, and \cite{cda8}. These contributions have shed light on the understanding of Ricci shrinkers beyond dimension four, but a complete classification is still an ongoing research endeavor.

\medskip

The question of rigidity in geometry pertains to whether a particular geometric object is uniquely determined within its moduli space. Rigidity results aim to describe how ``rigid" a given geometric structure is, i.e., how few variations or deformations it allows. In the context of Ricci shrinkers, there are some interesting rigidity results. In this paper, we are only interested in rigidity results for non-compact Ricci shrinkers. For the compact case, see some important works \cite{comp1}, \cite{comp2}, \cite{comp3}, \cite{comp4} and \cite{comp5}.

\medskip

Let us list several types of different and related results. Firstly Yokota, as referenced in \cite{y1} and \cite{y2}, used techniques for estimating the reduced volume to prove a result regarding Ricci shrinkers with entropy close to zero. Yokota demonstrated that any Ricci shrinker $(M^n,g,f)$ with entropy sufficiently close to zero is actually the Gaussian shrinker $(\mathbb{R}^n, g_e, \frac{|x|^2}{4})$. In particular, this result proves the Carrillo–Ni conjecture, which states that the Gaussian shrinker is the only Ricci shrinker with zero entropy (see \cite{carrillo}, Remark 7.2).

\medskip

Secondly, Li and Wang \cite{lw} proved a rigidity result for Ricci shrinkers close to the cylinder Ricci shrinker $(\mathbb{S}^{n-1}(r)\times \mathbb{R},g_c,f_c)$. They showed that if a Ricci shrinker is sufficiently close to the cylinder shrinker in the pointed-Gromov-Hausdorff topology, then it must be the cylinder shrinker itself. This result implies that the cylinder Ricci shrinker is isolated in the moduli space of all Ricci shrinkers with the pointed-Gromov-Hausdorff topology.

\medskip

Lastly, Cheng and Zhou \cite{XCDZ} proved that the spectrum of the drift Laplacian on a Ricci shrinker is discrete and there is a sharp lower bound for the first nonzero eigenvalue, and equality is achieved if and only if the shrinker splits off a line.

\medskip

Colding and Minicozzi \cite{CM} showed a result known as propagation of almost splitting. It states that if a Ricci shrinker almost splits on one scale, it also almost splits on larger scales. More precisely, if a Ricci shrinker is sufficiently close to the cylinder Ricci shrinker $(\mathbb{S}^{n-k}(r)\times \mathbb{R}^k,g_c,f_c)$ within a large compact set, then it must be the cylinder shrinker itself.

\medskip

To summarize, these rigidity results for non-compact Ricci shrinkers include the isolation of the cylinder shrinker in the moduli space, the confirmation of the Carrillo-Ni conjecture by Yokota's work, the discreteness of the spectrum of the drift Laplacian, and the propagation of almost splitting property. The references mentioned, such as \cite{comp1}, \cite{comp2}, \cite{comp3}, \cite{comp4}, \cite{comp5}, \cite{lw}, \cite{y1}, \cite{y2}, \cite{carrillo}, \cite{XCDZ}  and \cite{CM}, likely correspond to specific papers or sources that contain the complete details and proofs of the results discussed.

\medskip

Let $(M^n,g,f)$ be a Ricci shrinker such that $\int_M(4\pi)^{-\frac{n}{2}}e^{-f}dv_{g}=1$ and $\textrm{Ric}_f=\frac{1}{2}g$, where $dv_g$ denotes the volume element of $(M^n,g)$. Given a point $p\in M$, it is well-known that there is a domain $\Omega\subset T_pM$ such that $\left.\textrm{exp}_p\right|_{\Omega}:\Omega\rightarrow M\setminus \mathcal{C}(p)$ is a diffeomorphism, where $\mathcal{C}(p)$ denote the cut-locus of $p$. 

\medskip

\begin{definition} The measure $\overline{\nu}$ on $T_pM$ induced by the exponential map of $M$ to $p$ is defined by
$$\overline{\nu}=(4\pi)^{-\frac{n}{2}}\chi_{\Omega}e^{-\overline{f}}dv_{\overline{g}}$$
\noindent where  $\overline{g}$ and $\overline{f}$ denote the pullback of $g$ and $f$ by the map $\left.\textrm{exp}_p\right|_{\Omega}:\Omega\rightarrow M\setminus \mathcal{C}(p)$, respectively. Here, $\chi_{\Omega}$ denotes the characteristic function of $\Omega$.
\end{definition}

\medskip

In this paper, we will consider two probability measures defined in the tangent space $T_pM$, namely the Gaussian measure $\gamma$ and the measure $\overline{\nu}$ induced by the exponential map of $M$ to $p$. Both measures are probability measures with finite second moments with respect to the Euclidean metric $g_0$. Furthermore, the measure $\overline{\nu}$ is absolutely continuous with respect to $\gamma$.

\medskip

The Wasserstein distance quantifies the dissimilarity or discrepancy between probability measures, taking into account the underlying geometry of the space (see Section \ref{sec2} for more details). The main result of this paper is a theorem that provides an upper estimate for the Wasserstein distance $W_{g_0}$ between the measures $\overline{\nu}$ and $\gamma$. It not only provides an upper bound for the Wasserstein distance between $\overline{\nu}$ and $\gamma$, but also elucidates the rigidity implications resulting from this estimate.

\begin{theorem}\label{teopr} Let $(M^n,g,f)$ be a Ricci shrinker such that $\textrm{Ric}_f=\frac{1}{2}g$ and  $\int_M(4\pi)^{-\frac{n}{2}}e^{-f}dv_{g}=1$. Then, for any point $p\in M$, the Wasserstein distance between the probability measure $\overline{\nu}$ on $T_pM$ and the Gaussian measure $\gamma$ on $T_pM$ satisfies
\begin{equation}\label{eqp12} \frac{1}{4}W_{g_0}^2(\overline{\nu},\gamma) \leq \alpha e^{f(p)-\mu_g}+f(p)-\mu_g\end{equation}

\noindent where $\mu_g$ denotes the entropy of the Ricci shrinker $(M^n,g,f)$, $\alpha$ is a nonnegative constant depending only on the dimension $n$, and nonnegative constants $a$ and $b$  such that
\begin{equation}\label{eqp1} f(x)\geq \frac{r^2(x)}{4}-ar(x)-b\end{equation}
\noindent for every $x\in M$. Here, $r(x)=d_g(p,x)$ is the geodesic distance function to the point $p$. Moreover, if the equality holds in (\ref{eqp12}) then $a=b=0$ and $(M^n,g,f)$ is $(\mathbb{R}^n,g_{e},\frac{|x-p|^2}{4})$, i.e., the Gaussian shrinker up to a translation.
\end{theorem}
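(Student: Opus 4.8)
The plan is to bound the Wasserstein distance by a relative entropy and then estimate that entropy using the soliton structure along radial geodesics. Since $\gamma$ is the centered Gaussian on $T_pM\cong\mathbb{R}^n$ with density proportional to $e^{-|v|^2/4}$, its covariance is $2\,\mathrm{Id}$, and Talagrand's transport--entropy inequality (rescaled from the standard Gaussian) reads $\tfrac14 W_{g_0}^2(\overline{\nu},\gamma)\le \mathrm{Ent}_\gamma(\overline{\nu})=\int_{T_pM}\log\frac{d\overline{\nu}}{d\gamma}\,d\overline{\nu}$; this is legitimate because $\overline{\nu}\ll\gamma$. Thus it suffices to bound the relative entropy by the right-hand side of (\ref{eqp12}), and the factor $\tfrac14$ is exactly the sharp Talagrand constant for this Gaussian.

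Writing $dv_{\overline g}=J\,dv_{g_0}$ for the Jacobian $J$ of $\exp_p$ (so $J=\mathcal A/r^{n-1}$ in geodesic polar coordinates, with $\mathcal A$ the area density), one has $\log\frac{d\overline{\nu}}{d\gamma}=\frac{r^2}{4}-\overline f+\log J$ on $\Omega$, whence $\mathrm{Ent}_\gamma(\overline{\nu})=\frac14\int_M r^2\,d\nu-\int_M f\,d\nu+\int_\Omega\log J\,d\overline{\nu}$, where $d\nu=(4\pi)^{-n/2}e^{-f}dv_g$. Here I would use the soliton identities: integrating $\Delta_f f=\frac n2+\mu_g-f$ gives $\int_M f\,d\nu=\frac n2+\mu_g$, and $R+|\nabla f|^2-f=-\mu_g$ gives both $f(p)-\mu_g=R(p)+|\nabla f(p)|^2\ge 0$ and $|\nabla f(p)|^2=f(p)-R(p)-\mu_g\le f(p)-\mu_g$. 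The lower bound (\ref{eqp1}) yields $\frac{r^2}{4}-\overline f\le a\,r+b$, so the problem reduces to controlling the first moment $\int_M r\,d\nu$ and the volume-distortion term $\int_\Omega\log J\,d\overline{\nu}$.

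The engine for these is the soliton Laplacian comparison. Along a minimizing geodesic from $p$, the trace Riccati identity $\partial_r(\Delta r)=-|S|^2-\mathrm{Ric}(\partial_r,\partial_r)$ for the shape operator $S$ of geodesic spheres, combined with $\mathrm{Ric}(\partial_r,\partial_r)=\frac12-\nabla^2f(\partial_r,\partial_r)$, gives the exact relation $\partial_r(\Delta_f r)=-|S|^2-\frac12\le-\frac12$. This controls $\Delta_f r$, and since $\partial_r\log J=\Delta r-\frac{n-1}{r}=\Delta_f r+\langle\nabla f,\partial_r\rangle-\frac{n-1}{r}$, it feeds a bound on $\int_\Omega\log J\,d\overline{\nu}$. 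Assembling this with (\ref{eqp1}) and $|\nabla f(p)|^2\le f(p)-\mu_g$ produces a pointwise exponential bound on $\frac{d\overline{\nu}}{d\gamma}$ of the form $e^{-f(p)-\langle\nabla f(p),v\rangle}$ up to lower-order corrections; inserting this into the Gaussian integral and completing the square is what generates the factor $e^{f(p)-\mu_g}$, while the first and second moments are absorbed into the linear term $f(p)-\mu_g$ and the constant $\alpha=\alpha(n,a,b)$. I expect this last assembly to be the main obstacle: one must extract precisely the exponential with $\alpha$ independent of $\mu_g$, keep the coefficient of $f(p)-\mu_g$ equal to one so the estimate is sharp on the Gaussian, and verify that the Jacobian comparison is valid up to the cut locus with no boundary terms in the integral identities.

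For the rigidity, equality in (\ref{eqp12}) forces equality in both the Talagrand step and the entropy estimate. Equality in Talagrand's inequality for a Gaussian reference forces $\overline{\nu}$ to be a translate of $\gamma$, i.e. a Gaussian of covariance $2\,\mathrm{Id}$ on $T_pM$; decoding the definition of $\overline{\nu}$ as the pullback under $\exp_p$, this forces $J\equiv 1$ (no volume distortion, hence flat, so $M=\mathbb{R}^n$) and $\overline f$ quadratic, so $f=\frac{|x-x_0|^2}{4}$. Tracing the equality through (\ref{eqp1}) then forces $a=b=0$ and $\nabla f(p)=0$, i.e. $x_0=p$, so that $(M^n,g,f)=(\mathbb{R}^n,g_e,\frac{|x-p|^2}{4})$, the Gaussian shrinker centered at $p$.
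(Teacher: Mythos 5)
Your overall architecture does match the paper's: the transport--entropy step $\tfrac14 W_{g_0}^2(\overline{\nu},\gamma)\le \int_{T_pM}\log\bigl(\tfrac{d\overline{\nu}}{d\gamma}\bigr)d\overline{\nu}$ (which the paper gets from Otto--Villani plus Bakry--\'Emery applied to the Gaussian shrinker $(T_pM,g_0,\tfrac{|x|^2}{4})$, after first proving in Theorem \ref{cz} that the moments are finite --- note that absolute continuity alone does not license the Talagrand step; you also need $\overline{\nu}\in\mathcal{P}_2$, which is Lemma \ref{l1}), and the same decomposition of the relative entropy into $\int(f_0-\overline{f})\,d\overline{\nu}$ plus the Jacobian term $\int\log J\,d\overline{\nu}$. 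But the step you yourself flag as ``the main obstacle'' is a genuine gap, and your sketched mechanism does not close it. The paper's key move, absent from your proposal, is to substitute Hamilton's identity $R_g+|\nabla f|^2=f-\mu_g$ (valid under the normalization $V_f(M)=1$) into the inequality $\frac{d}{dr}\bigl(r\log\tfrac{\mathcal{A}(r,\theta)}{r^{n-1}}\bigr)\le-\tfrac{r^2}{4}+rf'(r)-f(r,\theta)+f(p)$ and complete the square, yielding the pointwise Jacobian bound $\mathcal{A}(r,\theta)\le r^{n-1}e^{f(p)-\mu_g-\frac{1}{r}\int_0^rR_g}\le r^{n-1}e^{f(p)-\mu_g}$. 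This single estimate produces both $\int\log J\,d\overline{\nu}\le f(p)-\mu_g$ with coefficient exactly one, and --- combined with $e^{-f}\le e^{-r^2/4+ar+b}$ from (\ref{eqp1}) --- the explicit bound $\int(f_0-\overline{f})\,d\overline{\nu}\le\alpha(n,a,b)\,e^{f(p)-\mu_g}$ via Gamma-type integrals. Your substitute, the Riccati bound $\partial_r(\Delta_f r)\le-\tfrac12$, integrates instead to the pointwise density bound $\tfrac{d\overline{\nu}}{d\gamma}\le e^{-f(p)-\langle\nabla f(p),v\rangle}$ (exact on the Gaussian), but then the entropy estimate requires controlling the barycenter term $-\langle\nabla f(p),\int v\,d\overline{\nu}\rangle$; estimating it by Cauchy--Schwarz, $|\nabla f(p)|^2\le f(p)-\mu_g$, and Gaussian moment bounds produces an extra factor of order $1+(f(p)-\mu_g)$ multiplying the exponential --- a term $t\,e^{t}$ with $t=f(p)-\mu_g$ cannot be absorbed into $\alpha e^{t}+t$ with $\alpha=\alpha(n,a,b)$ only. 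So, as sketched, you obtain a strictly weaker inequality than (\ref{eqp12}), and sharpness at the Gaussian (hence the whole equality analysis) is lost.

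Your rigidity argument also has a concrete flaw. Knowing that $\overline{\nu}$ is a translate of $\gamma$ gives only the identity $\chi_{\Omega}\,J\,e^{-\overline{f}}=e^{-|v-m|^2/4}$ about the \emph{product} $Je^{-\overline{f}}$; it does not separately force $J\equiv1$ and $\overline{f}$ quadratic, so flatness does not follow from the Talagrand equality case by ``decoding.'' In the paper, flatness comes from equality in the Jacobian comparison: equality in the chain forces $\int_0^rR_g(s,\theta)\,ds=0$, hence $R_g\equiv0$, hence $(M,g)$ isometric to $(\mathbb{R}^n,g_e)$ by the Pigola--Rimoldi--Setti result; only then is $f$ necessarily of the quadratic form $\tfrac{|x-p|^2}{4}+\langle x-p,y\rangle+|y|^2+\mu_g$, and equality in (\ref{eqp1}) forces $a=0$, $y=0$, $b=-\mu_g$, with $\mu_g=0$ from the probability normalization. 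You should route the equality analysis through the entropy-estimate equalities (the Jacobian bound and the potential estimate), not through the transport step.
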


\begin{remark} Cao and Zhou \cite{caozhou} proved that the potencial function a Ricci shrinker always admits Gaussian upper and lower estimates, in particular, this result guarantees the existence of nonnegative constants $a$ and $b$ satisfying  (\ref{eqp1}). Furthermore, the nonnegative constant  $\alpha=\alpha(n,a,b)$ has an explicit form that involves functions with expressions similar to the Gamma functions, and satisfies $\alpha=0$ if $a=b=0$ (see Section \ref{sec4} for the exact expression of $\alpha$).
\end{remark}

There are two key ingredients in the proof of Theorem \ref{teopr}. The first one is a generalization of the Talangrand inequality obtained by combining results from Otto and Villani \cite{ov}, and Bakry and Émery \cite{BE}. This inequality gives us a lower bound for the Wasserstein distance between a probability measure with finite second moments $\nu$ associated with a complete smooth metric measure spaces $(M^n,g,f)$ with $\textrm{Ric}_f\geq \rho g$ for some positive constant $\rho$, and any other probability measure with finite second moments that is absolutely continuous with respect to $\nu$. In Theorem \ref{cz} of this paper, we prove that for $(M^n,g,f)$, the assumption of $\textrm{Ric}_f\geq \rho g$ for some constant $\rho>0$ implies that a measure $\nu=e^{-f}dv_g$ has finite moments of all orders, in particular, has second finite moments. So combining the Theorems of Otto-Villani and Bakry-Émery, we have the following.

\begin{theorem}\label{ttov} Let $(M^n,g,f)$ be a complete smooth metric measure space such that $\nu=e^{-f}dv_g$ is a probability measure and $\textrm{Ric}_f\geq \rho g$, for some $\rho>0$. Then the Wasserstein distance $W_g$ between $\nu$ and any probability measure with finite second moments $\eta$ absolutely continuous with respect to $\nu$, satisfies:
$$W_g^2(\eta,\nu)\leq \frac{2}{\rho}\int_{M}\log \left(\frac{d \eta}{d\nu}\right)d\eta.$$
\end{theorem}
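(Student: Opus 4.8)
The plan is to derive the inequality by combining two classical results: the Bakry--\'Emery criterion \cite{BE}, which turns the lower curvature bound $\textrm{Ric}_f\geq\rho g$ into a logarithmic Sobolev inequality for $\nu$, and the theorem of Otto and Villani \cite{ov}, which upgrades such a logarithmic Sobolev inequality into a transportation-cost (Talagrand) inequality of exactly the claimed form. The role of the hypothesis $\rho>0$ is twofold: it is the constant appearing in both inequalities, and, through Theorem \ref{cz}, it guarantees that $\nu=e^{-f}dv_g$ has finite moments of all orders, which is the integrability input that the Otto--Villani theorem needs on a noncompact space.

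First I would record that, by Theorem \ref{cz}, the assumption $\textrm{Ric}_f\geq\rho g$ forces $\nu$ to have finite second moment. Together with the standing hypothesis that $\eta$ has finite second moment, this makes the optimal transport problem between $\eta$ and $\nu$ well posed and ensures $W_g(\eta,\nu)<\infty$, so that the inequality to be proved is meaningful. Next I would apply the Bakry--\'Emery theorem to the weighted manifold $(M^n,g,f)$: since $\nu$ is a probability measure and $\textrm{Ric}_f\geq\rho g$ with $\rho>0$, the measure $\nu$ satisfies the logarithmic Sobolev inequality
$$\int_M h^2\log h^2\,d\nu-\Big(\int_M h^2\,d\nu\Big)\log\Big(\int_M h^2\,d\nu\Big)\leq\frac{2}{\rho}\int_M|\nabla h|^2\,d\nu$$
for all sufficiently regular functions $h$ (first for smooth compactly supported $h$, then by density). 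Finally, the Otto--Villani theorem converts this logarithmic Sobolev inequality with constant $2/\rho$ into the Talagrand inequality
$$W_g^2(\eta,\nu)\leq\frac{2}{\rho}\int_M\log\Big(\frac{d\eta}{d\nu}\Big)\,d\eta$$
valid for every probability measure $\eta\ll\nu$ with finite second moment, which is precisely the assertion; one checks directly that the $2/\rho$ constants produced by the two steps coincide.

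The step I expect to be the main obstacle is not the formal chain of implications but the verification that the Otto--Villani theorem applies in this generality. Their argument, based on the Hamilton--Jacobi (Hopf--Lax) semigroup, requires the reference measure to have enough integrability for the relevant boundary terms to be controlled and for $W_g$ to be finite; on a general complete, possibly unbounded-geometry weighted manifold this is not automatic. This is exactly the gap filled by Theorem \ref{cz}: the finiteness of all moments of $\nu$, derived from the curvature lower bound, is what legitimizes passing from the logarithmic Sobolev inequality to the transportation inequality. Once this integrability is in hand, no further computation is needed beyond matching the constants in the two inequalities.
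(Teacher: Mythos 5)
Your proposal is correct and takes essentially the same route as the paper: the authors likewise prove Theorem \ref{ttov} by combining the Bakry--\'Emery logarithmic Sobolev inequality (Theorem \ref{be}) with the Otto--Villani implication LSI($\rho$) $\Rightarrow$ T($\rho$) (Theorem \ref{ov}), and they use Theorem \ref{cz} exactly as you do, to show that $\textrm{Ric}_f\geq\rho g$ with $\rho>0$ already forces $\nu$ to have finite second moments, rendering that hypothesis of Otto--Villani superfluous (Remark \ref{rfm}). You also correctly identify the integrability of $\nu$ as the only genuinely new input, which is the content of Proposition \ref{t1} in the paper.
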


\medskip

We apply Theorem \ref{ttov} to the Ricci shrinker $(T_pM, g_0, \frac{|x|^2_{g_0}}{4})$, and we obtain 
\begin{equation}\label{eqp}W_{g_0}^2(\gamma,\overline{\nu})\leq 4\int_{M}\log \left(\frac{d \overline{\nu}}{d\gamma}\right)d\overline{\nu}.\end{equation}

\medskip

The second ingredient is the technique for the volume growth for geodesic balls developed by Cheng, Ribeiro e Zhou in \cite{exd}, which allows us to obtain an upper bound for the density function $\frac{d\overline{\nu}}{d\gamma}$. Combining this lower bound, with estimate (\ref{eqp1}) for the potential function $f$ and inequality (\ref{eqp}), we obtain estimate (\ref{eqp12}).

\medskip

On each Ricci shrinker, the potential function always attains its minimum value at some point (see \cite{caozhou}), which can be chosen as the base point. Then, as a consequence of Theorem \ref{teopr} we have the following result.

\begin{cor}\label{copr} Let $(M^n,g,f)$ be a Ricci shrinker such that $\textrm{Ric}_f=\frac{1}{2}g$ and $\int_M(4\pi)^{-\frac{n}{2}}e^{-f}dv_{g}=1$. Then, if $f$ attains its minimum value at point $p\in M$, the Wasserstein distance between the probability measure $\overline{\nu}$ on $T_pM$ and the Gaussian measure $\gamma$ on $T_pM$ satisfies
$$\frac{1}{4}W_{g_0}^2(\overline{\nu},\gamma) \leq \alpha e^{R_g(p)}+R_g(p)$$
\noindent where $R_g$ denotes the scalar curvature of $g$, $\alpha$ is a nonnegative constant depending only on the dimension $n$, and nonnegative constants $a$ and $b$  such that
$$f(x)\geq \frac{r^2(x)}{4}-ar(x)-b$$
\noindent for every $x\in M$. Moreover, if the equality holds then $a=b=0$ and $(M^n,g,f)$ is $(\mathbb{R}^n,g_{e},\frac{|x-p|^2}{4})$, i.e., the Gaussian shrinker up to a translation.
\end{cor}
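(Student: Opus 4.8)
The plan is to deduce Corollary \ref{copr} directly from Theorem \ref{teopr}, the only task being to rewrite the quantity $f(p)-\mu_g$ appearing on the right-hand side of \eqref{eqp12} in terms of the scalar curvature when $p$ is a minimum point of $f$. By the result of Cao and Zhou \cite{caozhou} recalled just before the statement, the potential $f$ of any Ricci shrinker attains its minimum at some point, so it is legitimate to take the base point $p$ to be such a minimizer; there $\nabla f(p)=0$. The constants $a$, $b$ and $\alpha=\alpha(n,a,b)$ are inherited unchanged from Theorem \ref{teopr}, since the Gaussian lower bound for $f$ is the same hypothesis.

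The heart of the argument is the classical soliton identity. Since $\textrm{Ric}_f=\tfrac12 g$, tracing the defining equation gives $R_g+\Delta f=\tfrac n2$, and applying the contracted second Bianchi identity to the soliton equation yields the pointwise relation
\begin{equation*} R_g+|\nabla f|^2-f=C \end{equation*}
for some constant $C$, which I would identify with $-\mu_g$ as follows. Integrating the traced equation against the probability measure $d\mu=(4\pi)^{-n/2}e^{-f}\,dv_g$ and integrating $\int_M \Delta f\,d\mu$ by parts (the boundary terms vanish thanks to the Gaussian decay of $f$ from \cite{caozhou}, the same decay exploited in Theorem \ref{cz}) gives $\int_M(R_g+|\nabla f|^2)\,d\mu=\tfrac n2$. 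Comparing this with the integral of the pointwise identity shows $\int_M f\,d\mu=\tfrac n2-C$, while the definition of the entropy $\mu_g=\int_M(R_g+|\nabla f|^2+f-n)\,d\mu$ combined with the same two relations forces $\mu_g=\int_M f\,d\mu-\tfrac n2$. Eliminating $\int_M f\,d\mu$ yields $C=-\mu_g$, so that $R_g+|\nabla f|^2-f=-\mu_g$, i.e. $f=R_g+|\nabla f|^2+\mu_g$ everywhere.

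Evaluating this identity at the minimizer $p$, where $\nabla f(p)=0$, gives $f(p)-\mu_g=R_g(p)$. Substituting into \eqref{eqp12} immediately produces
\begin{equation*} \frac14 W_{g_0}^2(\overline\nu,\gamma)\le \alpha\, e^{R_g(p)}+R_g(p), \end{equation*}
which is the claimed estimate. The rigidity clause requires no additional work: equality here is exactly equality in \eqref{eqp12}, so the rigidity statement of Theorem \ref{teopr} applies verbatim and forces $a=b=0$ together with $(M^n,g,f)=(\mathbb{R}^n,g_e,\frac{|x-p|^2}{4})$. The only delicate point in the whole argument is the bookkeeping that pins the soliton constant $C$ to $-\mu_g$; once the normalization $\int_M d\mu=1$ and the definition of $\mu_g$ are fixed this is a short computation, and crucially it is the very normalization under which Theorem \ref{teopr} was established, so the two occurrences of $\mu_g$ are consistent.
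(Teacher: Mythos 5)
Your proposal is correct and follows essentially the same route as the paper: both reduce to Theorem \ref{teopr} via the identity $f(p)-\mu_g=R_g(p)$ at the minimizer, which is exactly Remark \ref{rentropy} (the case $V_f(M)=1$ of Hamilton's identity) evaluated at the point where $\nabla f(p)=0$, and both inherit the rigidity clause verbatim from the main theorem. The only difference is cosmetic: the paper \emph{defines} $\mu_g=\log V_f(M)-C$, so under the normalization $V_f(M)=1$ the identification $C=-\mu_g$ holds by definition, which renders your integration-by-parts computation (a correct verification that this agrees with the Perelman $W$-entropy) unnecessary.
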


An interesting observation is that the inequalities obtained in Theorem \ref{teopr} and Corollary \ref{copr} are optimal in the sense that the equalities hold for the Gaussian $(\mathbb{R}^n,g_{e},\frac{|x|^2}{4})$ up to a suitable translation and conversely, these equalities imply that the Ricci shrinker must be the Gaussian shrinker $(\mathbb{R}^n,g_{e},\frac{|x|^2}{4})$ up to a translation.

\medskip

The article is organized as follows. In Section \ref{sec2}, we fix some notations and recall key definitions and necessary results in our arguments. Theorem \ref{ttov} is found in Section \ref{sec3}, where we prove that the measure associated with a complete smooth metric measure space with a positive lower bound under Bakry-Émery Ricci curvature has finite moments of all orders. Finally, in Section \ref{sec4}, we present the proof of the main Theorem \ref{teopr}  and some of its consequences.

\medskip

\noindent {\bf Acknowledgment.} The first author would like to thank the Conselho Nacional de Desenvolvimento Científico e Tecnológico (CNPq) for the financial support and the Instituto de Matemática e Estatística at the Universidade Federal Fluminense for the hospitality during her postdoctoral studies.

\section{Preliminaries}\label{sec2}

In this section, we fix some notations and recall fundamental definitions and necessary results concerning Ricci shrinkers and the Wasserstein distance for a better understanding of the subsequent sections.

\subsection{Ricci shrinkers}

Sometimes we refer to a Ricci shrinker  $(M^n, g, f)$ such that $\textrm{Ric}_f =\frac{1}{2\tau}g$ simply as a {\it $\tau$-Ricci shrinker}. It is well-known that any Ricci shrinker $(M^n,g,f)$ has nonnegative scalar curvature $R_g\geq 0$ (see \cite{chen}). Furthermore, by the maximum principle, we have that  $R_g>0$ unless $R_g\equiv 0$, and $(M^n,g)$ is isometric to $(\mathbb{R}^n,g_e)$ (see \cite{prs}, Theorem 3).

\begin{example}\label{ex2} Let $(M^n,g)$ be a complete Riemannian manifold and fix point $p\in M$. The triple $(T_pM,g_0,f_0)$, where $g_0$ is the Euclidean metric on $T_pM$ and $f_0(x)=\frac{|x|_{g_0}^2}{4}$, is a $1$-Ricci shrinker. The measure $\gamma=(4\pi)^{-\frac{n}{2}}e^{-f_0}dv_{g_0}$ on $T_pM$  is a probability measure and is called the {\it Gaussian measure}.
\end{example}

\medskip

Let's recall some useful facts about full Ricci shrinkers.

\medskip

\begin{theorem}[Hamilton, \cite{hamilton}]\label{H} Let $(M^n,g,f)$ be a $\tau$-Ricci shrinker. Then there is a constant $C$ such that
$$\tau(R_g+|\nabla f|^2)=f+C.$$
\end{theorem}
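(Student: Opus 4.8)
The plan is to derive the identity by differentiating the soliton equation and recognizing that a certain one-form is exact with vanishing differential. Writing the defining equation in local components as $R_{ij}+\nabla_i\nabla_j f=\frac{1}{2\tau}g_{ij}$, I would first take the trace to obtain the scalar relation $R_g+\Delta f=\frac{n}{2\tau}$. Differentiating this gives the useful byproduct $\nabla_j\Delta f=-\nabla_j R_g$, which will be needed to eliminate the $\Delta f$ terms later.

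Next I would take the divergence $\nabla^i$ of the full soliton equation. The Ricci term is controlled by the contracted second Bianchi identity $\nabla^i R_{ij}=\frac{1}{2}\nabla_j R_g$, while the Hessian term requires commuting covariant derivatives through the Bochner-type formula $\Delta\nabla_j f=\nabla_j\Delta f+R_{jk}\nabla^k f$. Substituting the trace relation $\nabla_j\Delta f=-\nabla_j R_g$, the scalar-curvature contributions combine cleanly and one is left with the key intermediate identity $R_{jk}\nabla^k f=\frac{1}{2}\nabla_j R_g$.

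The final step is to feed the soliton equation back into this intermediate identity. Using $R_{jk}=\frac{1}{2\tau}g_{jk}-\nabla_j\nabla_k f$ together with the observation $\nabla_j\nabla_k f\,\nabla^k f=\frac{1}{2}\nabla_j|\nabla f|^2$, I would rearrange the resulting expression into the form $\nabla_j\bigl[\tau(R_g+|\nabla f|^2)-f\bigr]=0$. Since $M$ is connected, the bracketed function is constant, and setting $C$ equal to that constant yields exactly $\tau(R_g+|\nabla f|^2)=f+C$.

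I expect the main obstacle to be bookkeeping rather than anything conceptual: applying the commutation formula and the contracted Bianchi identity with the correct signs, traces, and index placements, and verifying that the $\Delta f$ contributions cancel precisely once the trace relation is inserted. The remainder is straightforward algebraic manipulation of the tensor identities, so the argument is essentially self-contained given the standard curvature formulas.
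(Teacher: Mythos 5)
Your derivation is correct and is precisely the classical argument of Hamilton that the paper cites for this statement without reproducing a proof: trace the soliton equation to get $\nabla_j\Delta f=-\nabla_j R_g$, take the divergence using the contracted second Bianchi identity and the commutation formula to arrive at $R_{jk}\nabla^k f=\tfrac{1}{2}\nabla_j R_g$, then substitute the soliton equation back to conclude $\nabla_j\bigl[\tau(R_g+|\nabla f|^2)-f\bigr]=0$ and invoke connectedness. All the signs and tensor identities you invoke check out, so there is nothing to correct.
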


\medskip

The following theorem gives us an estimate for the potential function of a noncompact Ricci shrinker.

\medskip

\begin{theorem}[Cao and Zhou, \cite{caozhou}] Let $(M^n,g,f)$ be a noncompact $1$-Ricci shrinker with $f=|\nabla f|^2+R_g$. Fix a point $p\in M$ and consider the distance function $r(x)=d_g(x,p)$, $x\in M$. Then
$$\frac{1}{4}(r(x)-c)^2\leq f(x)\leq \frac{1}{4}(r(x)+c)^2$$
\noindent for every $x\in M$ with $r(x)$ sufficiently large, where $c$ is a positive constant depending only on $n$ and the geometry of $g$ on the unit ball $B_1(p)$.
\end{theorem}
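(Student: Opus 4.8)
The plan is to prove the two inequalities separately, in each case by integrating along a minimal geodesic from $p$, using the normalization $f=|\nabla f|^2+R_g$ together with the soliton equation in the form $\nabla^2 f=\frac12 g-\textrm{Ric}$.

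For the upper bound I would first observe that, since $R_g\ge 0$, the normalization gives $|\nabla f|^2\le f$ and in particular $f\ge 0$, so that $|\nabla\sqrt f|=\frac{|\nabla f|}{2\sqrt f}\le\frac12$ wherever $f>0$. Integrating this gradient bound along a minimal geodesic from $p$ to $x$ yields $\sqrt{f(x)}\le \sqrt{f(p)}+\frac12 r(x)$, and completing the square gives $f(x)\le\frac14(r(x)+c)^2$ with $c$ controlled by $\sqrt{f(p)}$, which is bounded in terms of the geometry of $g$ on $B_1(p)$.

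The lower bound is the substantive part. Fix a unit-speed minimal geodesic $\gamma$ from $p$ and set $\phi(s)=f(\gamma(s))$. The soliton equation restricted to $\gamma$ reads $\phi''(s)=\frac12-\textrm{Ric}(\dot\gamma,\dot\gamma)$, so integrating once gives $\phi'(s_0)=\phi'(0)+\frac{s_0}{2}-\int_0^{s_0}\textrm{Ric}(\dot\gamma,\dot\gamma)\,ds$. The whole estimate reduces to a bound $\int_0^{s_0}\textrm{Ric}(\dot\gamma,\dot\gamma)\,ds\le C$ that is uniform in $s_0$, with $C$ depending only on $n$ and the geometry on $B_1(p)$: granting this, one obtains $\phi'(s_0)\ge\frac{s_0}{2}-C'$, and a second integration yields $f(\gamma(s_0))\ge\frac{s_0^2}{4}-C's_0+\phi(0)$, which after completing the square gives $f(x)\ge\frac14(r(x)-c)^2$ for $r(x)$ large. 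To produce the integral bound I would use that $\gamma$ is minimizing, so its index form is nonnegative: for any $\psi$ with $\psi(0)=\psi(s_0)=0$, summing over a parallel orthonormal normal frame gives $\int_0^{s_0}\psi^2\,\textrm{Ric}(\dot\gamma,\dot\gamma)\,ds\le(n-1)\int_0^{s_0}(\psi')^2\,ds$. Choosing $\psi$ equal to $1$ on $[1,s_0-1]$ and linear on the two end intervals makes the right-hand side a dimensional constant, while $\textrm{Ric}$ over the near-$p$ interval is absorbed into local data on $B_1(p)$.

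The main obstacle will be exactly the contribution of the region near the far endpoint. A naive cutoff reintroduces the values of $f$ and $\langle\nabla f,\dot\gamma\rangle$ near $\gamma(s_0)$, which are of size $O(s_0^2)$ and $O(s_0)$ and would destroy the uniformity of $C$; the delicate step is to organize the cutoff and the integration by parts, using the already-established upper bound $f\le\frac14(r+c)^2$ together with $|\nabla f|\le\sqrt f$, so that these growing terms are dominated and one is left with a constant independent of $s_0$. Once the uniform integral-Ricci bound is secured, the rest is the routine double integration and completion of the square described above.
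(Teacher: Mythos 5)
The paper does not actually prove this statement---it is quoted from Cao--Zhou \cite{caozhou}---so the relevant comparison is with their argument. Your upper bound is exactly theirs: $R_g\geq 0$ gives $|\nabla f|^2\leq f$, hence $|\nabla \sqrt{f}|\leq \tfrac12$ where $f>0$, and integration from $p$ yields $\sqrt{f(x)}\leq \sqrt{f(p)}+\tfrac{r(x)}{2}$. For the lower bound you have also assembled the right toolbox: second variation along a minimal geodesic with the trapezoidal cutoff $\phi$, the soliton equation $\ddot f=\tfrac12-\textrm{Ric}(\dot\gamma,\dot\gamma)$ restricted to $\gamma$, integration by parts on the far interval $[s_0-1,s_0]$, and the a priori bounds $|\dot f|\leq|\nabla f|\leq\sqrt{f}$. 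You even correctly flag the far-endpoint contribution as the delicate point.

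The genuine gap is the pivotal intermediate claim: a bound $\int_0^{s_0}\textrm{Ric}(\dot\gamma,\dot\gamma)\,ds\leq C$ \emph{uniform in $s_0$}, which is equivalent to the pointwise derivative bound $\dot f(s_0)\geq \tfrac{s_0}{2}-C'$ for every minimal geodesic. This is stronger than what the method yields, and your proposed mechanism for securing it cannot work: after substituting $\textrm{Ric}=\tfrac12-\ddot f$ on $[s_0-1,s_0]$ and integrating by parts, the surviving term $2\int_{s_0-1}^{s_0}\phi\,\dot f\,ds$ is, by your own estimates $|\dot f|\leq\sqrt f\leq\tfrac12(s+c)$, of size $\tfrac{s_0}{2}$; it is the \emph{main} term and cannot be ``dominated\ldots leaving a constant independent of $s_0$.'' The correct bookkeeping, as in Cao--Zhou, is a cancellation rather than an absorption: the boundary term $\dot f(s_0)$ produced by the integration by parts cancels exactly against the $\dot f(s_0)$ in the once-integrated soliton equation $\dot f(s_0)-\dot f(0)=\tfrac{s_0}{2}-\int_0^{s_0}\textrm{Ric}(\dot\gamma,\dot\gamma)\,ds$, leaving
\begin{equation*}
2\int_{s_0-1}^{s_0}\phi\,\dot f\,ds\ \geq\ \frac{s_0}{2}-c_1,
\end{equation*}
with $c_1$ controlled by $n$ and the geometry on $B_1(p)$. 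Since $\dot f\leq\sqrt f$ and $\sqrt f$ is $\tfrac12$-Lipschitz, one has $\dot f(s)\leq \sqrt{f(x)}+\tfrac12$ on $[s_0-1,s_0]$, and as $\int_{s_0-1}^{s_0}\phi\,ds=\tfrac12$ this gives $\sqrt{f(x)}\geq \tfrac{s_0}{2}-c$ directly---no uniform Ricci-integral bound, no pointwise bound on $\dot f(s_0)$, and no second integration. So your plan stalls precisely at the step you identified as delicate: the growing endpoint quantity is not to be beaten down to a constant; one piece of it cancels identically, and the other piece \emph{is} $\sqrt{f(x)}$, the quantity the theorem estimates.
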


\begin{remark} It follows from previous theorem that the potential function of a Ricci shrinker attains a global minimum.
\end{remark}

\medskip

Let $(M^n,g,f)$ be a $\tau$-Ricci shrinker. Its {\it $f$-volume} is defined by
$$V_f(M)=\int_M (4\pi\tau)^{-\frac{n}{2}}e^{-f}dv_g.$$

It was proved by Cao and Zhou in \cite{caozhou} that $V_f(M)$ is finite. Then, the {\it entropy} of the Ricci shrinker is given by
$$\mu_{g}=\log V_f(M)-C,$$

\noindent where $C\in \mathbb{R}$ is the constant given in Theorem \ref{H}. Note that the entropy of the Gaussian shrinker is equal to zero. In \cite{y1} and \cite{y2}, Yokota proved that $\mu_g\leq 0$ and a gap theorem showing how the entropy is involved with the global geometry.

\begin{theorem}[Yokota, \cite{y1} and \cite{y2}] There is a positive number $\epsilon=\epsilon(n) > 0$ depending only on the dimension $n$ with the following property: Any Ricci shrinker $(M^n,g,f)$ with $\mu_g\geq -\epsilon$ is the Gaussian shrinker $(\mathbb{R}^n,g_{e},\frac{|x|^2}{4})$.
\end{theorem}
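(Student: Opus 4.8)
The plan is to argue by contradiction via a compactness/limiting argument, exploiting the monotonicity and rigidity of Perelman's reduced volume (equivalently, the entropy functional $\mu_g$) along the associated Ricci flow. First I would recall that the entropy $\mu_g$ of a Ricci shrinker equals Perelman's $\mathcal{W}$-entropy evaluated on the self-similar solution it generates, and that the reduced volume $\tilde V(\tau)$ based at the soliton vertex is a monotone quantity bounded above by $1$, with $\tilde V \equiv 1$ characterizing Euclidean space $(\mathbb{R}^n, g_e)$ exactly (this is Perelman's rigidity statement). The normalization $\int_M (4\pi)^{-n/2} e^{-f} dv_g = 1$ together with Hamilton's identity (Theorem \ref{H}) ties $\mu_g$ directly to this reduced-volume-type quantity, so $\mu_g = 0$ happens if and only if the shrinker is Gaussian, and $\mu_g \le 0$ always (as stated, following Yokota).

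Next I would set up the contradiction: suppose no such $\epsilon(n)$ exists. Then there is a sequence of $n$-dimensional Ricci shrinkers $(M_i^n, g_i, f_i)$, none of which is the Gaussian shrinker, with entropies $\mu_{g_i} \to 0^-$. Normalizing each so that $f_i$ attains its minimum at a basepoint $p_i$ (possible by the Cao--Zhou estimate and the subsequent Remark), one obtains a pointed sequence whose geometry on the unit ball $B_1(p_i)$ is controlled: the scalar curvature at $p_i$ is bounded in terms of $\mu_{g_i}$ (via $R_{g_i}(p_i) = f_i(p_i) - C_i$ and the entropy normalization), and hence tends to $0$. The heart of the argument is a \emph{non-collapsing and pre-compactness} input: because $\mu_{g_i}$ is uniformly bounded below (by, say, $-1$ for large $i$), Perelman's $\kappa$-non-collapsing gives a uniform volume lower bound on unit balls, and standard $\epsilon$-regularity for shrinkers (curvature bounds propagating from small entropy defect, together with the shrinker equation $\Rc_{f_i} = \tfrac12 g_i$) yields uniform local curvature and injectivity-radius estimates. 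This allows extraction of a pointed smooth (Cheeger--Gromov) limit $(M_\infty^n, g_\infty, f_\infty, p_\infty)$ that is again a Ricci shrinker.

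I would then identify the limit. Entropy is continuous (more precisely, upper semicontinuous and here controlled) under this convergence, so $\mu_{g_\infty} = \lim_i \mu_{g_i} = 0$. By the rigidity recalled in the first paragraph, $\mu_{g_\infty} = 0$ forces $(M_\infty, g_\infty, f_\infty)$ to be the Gaussian shrinker $(\mathbb{R}^n, g_e, \tfrac{|x|^2}{4})$. The final step is to upgrade this limiting statement to an \emph{exact} statement for large $i$: one shows that any shrinker sufficiently close (in the pointed smooth topology, on a large enough compact region determined by the reduced-volume concentration) to the Gaussian shrinker must \emph{be} the Gaussian shrinker. This is where the self-improving structure of the soliton equation is essential --- closeness to $\mathbb{R}^n$ on a fixed large scale propagates outward (in the spirit of the almost-splitting/propagation results of Colding--Minicozzi \cite{CM} cited above), so proximity on a sufficiently large ball forces $R_{g_i} \equiv 0$, hence (by the $R_g > 0$ dichotomy recalled in the Preliminaries) isometry with Euclidean space. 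This contradicts the assumption that no $(M_i, g_i, f_i)$ is Gaussian, completing the proof.

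\textbf{Main obstacle.} The delicate point is the last step: promoting $C^\infty_{loc}$-closeness to the Gaussian to an exact identification. A pointed-smooth limit being Gaussian does not immediately make the approximating shrinkers Gaussian, since rigidity for $\mu_g = 0$ is an equality statement, not an open condition. One must therefore either invoke a quantitative gap/propagation mechanism ensuring that small entropy defect controls the curvature on \emph{all} scales (not merely locally), or argue that the reduced-volume monotonicity, combined with the soliton structure, leaves no room for a nonzero scalar curvature once the entropy defect is small enough. Making this quantitative --- rather than purely limiting --- is the crux, and it is precisely the content of Yokota's reduced-volume estimates that the excerpt attributes to \cite{y1, y2}.
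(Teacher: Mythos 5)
The paper does not prove this statement: it is quoted verbatim from Yokota \cite{y1}, \cite{y2}, whose actual argument is a \emph{direct quantitative} estimate --- he identifies the asymptotic reduced volume $\tilde V$ of the self-similar ancient solution generated by the shrinker with $e^{\mu_g}$, and then proves, by estimating the $\mathcal{L}$-geodesic geometry and reduced-volume density at all scales, that an ancient solution with $\tilde V > 1-\epsilon(n)$ is flat. Your proposal replaces this with a soft compactness-and-contradiction scheme, and that scheme has two genuine gaps. First, the pre-compactness step fails as stated: a lower entropy bound yields Perelman's $\kappa$-non-collapsing, but it does \emph{not} yield local curvature bounds, and the ``standard $\epsilon$-regularity for shrinkers'' you invoke --- uniform curvature and injectivity-radius estimates from small entropy defect alone --- is not an available result in general dimension $n$. (Even in dimension $4$, the Haslhofer--M\"uller compactness theorem for shrinkers gives only orbifold Cheeger--Gromov convergence and relies on dimension-specific $L^2$ curvature identities.) Without such bounds your limit exists only as a metric (measure) space, the assertion that it ``is again a Ricci shrinker'' is unjustified, and the continuity $\mu_{g_\infty}=\lim_i \mu_{g_i}$ is likewise delicate: entropy is in general only semicontinuous under pointed convergence and requires uniform integral control of the potentials $f_i$ at infinity.

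Second, and more fundamentally, the upgrading step is circular. The statement you need --- any shrinker sufficiently close to the Gaussian in the pointed smooth topology on a large compact set \emph{is} the Gaussian --- is exactly the isolation statement that the gap theorem encodes, and you supply no mechanism for it. The Colding--Minicozzi propagation-of-almost-splitting result \cite{CM} that you cite concerns cylinders $\mathbb{S}^{n-k}\times\mathbb{R}^k$ and propagates almost splitting of \emph{Euclidean factors}; it does not propagate almost flatness, so it cannot be used to force $R_{g_i}\equiv 0$. Smallness of $R_{g_i}$ on a fixed compact set is a priori compatible with nonflat behavior at infinity, and the dichotomy ``$R_g>0$ unless $(M,g)$ is flat'' recalled in the Preliminaries is an all-or-nothing statement, not a quantitative tool that converts approximate flatness into exact flatness. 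You correctly flag this as the main obstacle, but the obstacle \emph{is} the theorem: Yokota's reduced-volume estimates are precisely the quantitative input that replaces both your compactness step and your isolation step, so the proposal as written presupposes the result rather than proving it.
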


The above theorem states that the entropy of a nonflat Ricci shrinker cannot be too close to zero. Observe that as a consequence of the above theorem we have the Gaussian shrinker is the only Ricci shrinker with entropy equal to zero.

\begin{remark}\label{rentropy} Let $(M^n,g,f)$ be a $\tau$-Ricci shrinker. Then, the quantity $\mu_g$ satisfies
$$\tau(R_g+|\nabla f|^2)=f+\log V_f(M)-\mu_g.$$

\medskip

In particular, we have $V_f(M)=1$ if and only if
$$\tau(R_g+|\nabla f|^2)=f-\mu_g.$$
\end{remark}

\begin{example}Consider the cylinder Ricci shrinker $(\mathbb{S}^{n-k}(r) \times \mathbb{R}^k, g_c, f_c)$. In this case, we have that
$$\mu_{g_c}=\log \omega_{n-k}-\frac{(n-k)}{2}\left[\log\left(\frac{2\pi}{n-k-1}\right)+1\right],$$
\noindent where $\omega_{n-k}$ denotes the area of the unit $(n-k)$-dimensional sphere.
\end{example}

\medskip

\subsection{The Wasserstein distance} 

\medskip

Let $(M^n,g)$, $n\geq 2$, be a complete Riemannian manifold  with the distance $d_g$ induced by Riemanniann metric $g$. Recall that a measure $\nu$ on $M$ has {\it finite $k$-moments} with respect to $g$ ($k>0$), if 
$$\int_M d_g^k(p,.)d\nu< +\infty$$
\noindent for some (and hence any) $p\in M$. Denote by $\mathcal{P}(M)$ the set of all Borel probability measures on $M$ and $\mathcal{P}_k(M,g)$ the set of all Borel probability measures on $M$ with finite $k$-moments with respect to $g$.

\begin{remark} Let $B\subset M$ be a Borel set. Observe that if $\nu\in \mathcal{P}_k(M,g)$ then $\left.\nu\right|_B\in \mathcal{P}_k(M,g)$. Here $\left.\nu\right|_B$ is the restriction of $\nu$ to $B$, i.e., $$d\left.\nu\right|_B=\displaystyle\frac{\chi_B}{\nu(B)}d\nu.$$
\end{remark}

Given $\nu, \eta\in \mathcal{P}(M)$, we say that $\pi\in \mathcal{P}(M\times M)$ is a {\it coupling} of $(\nu,\eta)$ if $$\pi(A\times M)=\nu(A) \ \ \ \text{and} \ \ \ \pi(M\times A)=\eta(A)$$
\noindent for every Borel set $A\subset M$. We denote by $\Pi(\nu,\eta)$ the set of all coupling of $(\nu,\eta)$. For any $\eta,\nu\in\mathcal{P}(M)$ we have that $\Pi(\nu,\eta)\not=\emptyset$ because the product measure $\nu\times \eta$ is a coupling of $(\nu,\eta)$.

\medskip

Recall that the {\it Wasserstein distance} between two measures $\eta,\nu\in\mathcal{P}_2(M,g)$ is given by

\begin{equation}\label{wdis}W_g(\eta,\nu)=\left(\inf_{\pi\in \Pi(\eta,\nu)}\int_{M\times M}d_g^2(x,y)d\pi(x,y)\right)^{\frac{1}{2}}.\end{equation}

A coupling $\pi\in \Pi(\eta,\nu)$ is said to be {\it optimal} if it attains the infimum in (\ref{wdis}). For any $\eta,\nu\in\mathcal{P}_2(M,g)$ we have that $W_g(\eta,\nu)$ is finite and an optimal coupling always exists (see \cite{villani}, Chapter 4). Moreover, $(\mathcal{P}_2(M,g),W_g)$ is a complete and separable metric space (see \cite{villani}, Chapter 6).

\medskip

\begin{example}
Fix a point $p\in M$ and consider $\delta_p$ the Dirac measure at $p$. For any $\nu\in\mathcal{P}_2(M,g)$, the product measure $\delta_p\times \nu$ is the only coupling of $(\delta_p,\nu)$, which therefore is optimal, and then
$$W_g(\delta_p,\nu)=\left(\int_Md_g^2(p,.)d\nu\right)^{\frac{1}{2}}.$$

In particular, for any $q\in M$ we have
$$W_g(\delta_p,\delta_q)=d_g(p,q).$$
\end{example}

\medskip

\begin{remark} Observe that there is a natural isometric immersion of $(M,d_g)$ into $(\mathcal{P}_2(M,g),W_g)$, namely the map $x\mapsto \delta_x$.
\end{remark}

\medskip

Consider now a complete smooth metric measure space $(M^n,g,f)$ such that $\nu=e^{-f}dv_g\in \mathcal{P}(M)$. Recall that if $\eta\in \mathcal{P}(M)$ is absolutely continuous with respect to $\nu$, we define its {\it relative entropy} with respect to $\nu$ by
$$H(\eta|\nu)=\int_{M}\log \left(\frac{d\eta}{d\nu}\right)d\eta$$

\noindent and, we define its {\it relative Fisher information} with respect to $\nu$ by
$$I(\eta|\nu)=\int_{M}\left|\nabla \log\left(\frac{d\eta}{d\nu}\right)\right|^2d\eta.$$

We say that $\nu$ satisfies a logarithmic Sobolev inequality with constant $\rho>0$, or satisfies LSI($\rho$) for short, if for all measure $\eta\in\mathcal{P}(M)$ absolutely continuous with respect to $\nu$, we have
$$H(\eta | \nu)\leq \frac{1}{2\rho}I(\eta | \nu).$$

If $\nu\in \mathcal{P}_2(M,g)$, we say that $\nu$ satisfies a Talagrand inequality with constant $\rho>0$, or satisfies T($\rho$) for short, if for all measure $\eta \in \mathcal{P}_2(M,g)$ absolutely continuous with respect to $\nu$, we have
$$W_g^2(\eta,\nu)\leq \frac{2}{\rho}H(\eta | \nu).$$

Talagrand proved that if $(M^n,g,f)$ is the Ricci shrinker $(\mathbb{R}^n,g_e, \frac{|x|^2}{2}+\frac{n}{2}\log(2\pi))$ então $\nu$ satisfies a Talagrand inequality with constant $\rho=1$. In \cite{ov}, Otto and Villani generalized this Talagrand result to a very wide class of complete smooth metric measure space: namely, all complete smooth metric measure spaces $(M^n,g,f)$ with $\nu\in \mathcal{P}_2(M,g)$ satisfying the logarithmic inequality of Sobolev for some positive constant and lower bounded Bakry-Émery Ricci curvature. More precisely, they proved the following result.

\begin{theorem}[Otto and Villani, \cite{ov}]\label{ov} Let $(M^n,g,f)$ be a complete smooth metric measure space such that $\nu=e^{-f}dv_g$ is a probability measure with finite second moments and $Ric_f\geq cg$, for some $c\in\mathbb{R}$. If $\nu$ satisfies  LSI($\rho$) for some $\rho>0$, then it also satisfies T($\rho$).
\end{theorem}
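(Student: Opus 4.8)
The plan is to prove the implication by the gradient-flow (entropy-dissipation) method, which is the route of Otto and Villani. The starting point is that the reference measure $\nu=e^{-f}dv_g$ is the equilibrium of the weighted heat flow generated by the drift Laplacian $\Delta_f=\Delta-\langle\nabla f,\nabla\,\cdot\,\rangle$, and that this Fokker--Planck flow is precisely the gradient flow of the relative entropy $H(\,\cdot\,|\nu)$ in the Wasserstein space $(\mathcal{P}_2(M,g),W_g)$. Fix $\eta\in\mathcal{P}_2(M,g)$ absolutely continuous with respect to $\nu$; after a standard truncation and smoothing of the density $d\eta/d\nu$ we may assume $\eta$ has smooth bounded density with finite entropy and finite Fisher information, and then recover the general case by approximation. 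First I would let $(\eta_t)_{t\geq 0}$ denote the evolution of $\eta$ under this flow, with $\eta_0=\eta$.

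The argument rests on three properties of this curve. \emph{(a) Entropy dissipation:} $\frac{d}{dt}H(\eta_t|\nu)=-I(\eta_t|\nu)$. \emph{(b) Metric speed:} the speed of $t\mapsto\eta_t$ in $W_g$ is controlled by the Fisher information, namely $|\dot\eta_t|_{W_g}=\sqrt{I(\eta_t|\nu)}$, which is Otto's identification of $\sqrt{I}$ with the Wasserstein gradient norm of the entropy. \emph{(c) Convergence to equilibrium:} since $\nu$ satisfies LSI($\rho$), combining (a) with $I\geq 2\rho H$ gives $\frac{d}{dt}H(\eta_t|\nu)\leq -2\rho\,H(\eta_t|\nu)$, hence $H(\eta_t|\nu)\leq e^{-2\rho t}H(\eta|\nu)\to 0$ and $\eta_t\to\nu$. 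Because $W_g(\eta,\nu)$ is bounded by the length of any curve joining $\eta$ to $\nu$, property (b) yields
\[
W_g(\eta,\nu)\leq \int_0^\infty |\dot\eta_t|_{W_g}\,dt=\int_0^\infty \sqrt{I(\eta_t|\nu)}\,dt.
\]

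It then remains to estimate the right-hand side using only LSI. Writing $\sqrt{I}=I/\sqrt{I}$ and bounding the denominator from below by LSI, one gets $\sqrt{I(\eta_t|\nu)}\leq I(\eta_t|\nu)/\sqrt{2\rho\,H(\eta_t|\nu)}$, which by (a) equals $\frac{1}{\sqrt{2\rho}}\bigl(-\frac{d}{dt}H(\eta_t|\nu)\bigr)/\sqrt{H(\eta_t|\nu)}$. Integrating and recognizing an exact derivative,
\[
\int_0^\infty \sqrt{I(\eta_t|\nu)}\,dt\leq \frac{1}{\sqrt{2\rho}}\int_0^\infty \frac{-\frac{d}{dt}H(\eta_t|\nu)}{\sqrt{H(\eta_t|\nu)}}\,dt=\frac{1}{\sqrt{2\rho}}\left[2\sqrt{H(\eta|\nu)}-2\sqrt{H(\eta_\infty|\nu)}\right]=\sqrt{\frac{2}{\rho}H(\eta|\nu)},
\]
where $H(\eta_\infty|\nu)=0$ by (c). Squaring gives $W_g^2(\eta,\nu)\leq \frac{2}{\rho}H(\eta|\nu)$, i.e.\ T($\rho$). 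Note that the Talagrand constant equals the LSI constant $\rho$ exactly, because the curvature hypothesis never enters this final integration quantitatively.

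The main obstacle is rigor rather than the formal computation: one must justify that the Fokker--Planck flow genuinely is the $W_g$-gradient flow of $H(\,\cdot\,|\nu)$ and that identities (a) and (b) hold on a complete, possibly noncompact, weighted manifold. This is exactly where the hypothesis $\textrm{Ric}_f\geq cg$ enters — the bound makes $H(\,\cdot\,|\nu)$ geodesically $c$-convex along Wasserstein geodesics, which activates the gradient-flow calculus (the energy identity and the metric-derivative formula) and controls the behavior of $\eta_t$ at infinity, while the finite-second-moment assumption keeps the whole curve in $\mathcal{P}_2(M,g)$ so that the length bound is meaningful. Carefully setting up this differential structure on $(\mathcal{P}_2(M,g),W_g)$, together with the density regularization that legitimizes the manipulations, are the delicate points; the quantitative passage from LSI to Talagrand is then the elementary integration above.
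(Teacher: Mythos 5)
The paper contains no proof of this statement: it is quoted as a known theorem of Otto and Villani \cite{ov}, so the only meaningful comparison is with the original source. Your proposal is essentially a faithful reconstruction of Otto and Villani's own entropy-dissipation argument, and the quantitative core — bounding $W_g(\eta,\nu)$ by $\int_0^\infty\sqrt{I(\eta_t|\nu)}\,dt$ and then writing $\sqrt{I}\leq I/\sqrt{2\rho H}=-\frac{d}{dt}(2\sqrt{H})/\sqrt{2\rho}$ to integrate exactly — is correct and recovers the sharp constant. Three points deserve tightening. First, in step (b) you assert the \emph{equality} $|\dot\eta_t|_{W_g}=\sqrt{I(\eta_t|\nu)}$; this is the formal Otto-calculus identification, and what one actually proves rigorously (and all the length bound needs) is the one-sided estimate $|\dot\eta_t|_{W_g}\leq\sqrt{I(\eta_t|\nu)}$, which is the content of the key lemma in \cite{ov}. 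Second, your step (c) is mildly circular as stated: exponential decay of $H(\eta_t|\nu)$ does not by itself give $W_g(\eta_t,\nu)\to 0$ — that implication is exactly the Talagrand inequality you are trying to prove. The standard repair is to observe that $\int_0^\infty\sqrt{I(\eta_t|\nu)}\,dt<\infty$ makes $t\mapsto\eta_t$ a Cauchy curve in the complete metric space $(\mathcal{P}_2(M,g),W_g)$, hence convergent, and the limit must be $\nu$ because $H(\cdot|\nu)$ is lower semicontinuous and $H(\eta_t|\nu)\to 0$. Third, your attribution of the hypothesis $\textrm{Ric}_f\geq cg$ to the geodesic $c$-convexity of the entropy is the modern (Ambrosio--Gigli--Savar\'e) viewpoint; in \cite{ov} itself the curvature bound is used more prosaically, to guarantee well-posedness and the needed regularity of the Fokker--Planck semigroup on a noncompact manifold and to justify the derivative estimate for $t\mapsto W_g(\eta,\eta_t)$. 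With these repairs your outline is a correct proof scheme, identical in route to the cited original rather than to anything proved in this paper.
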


We now recall a well-known sufficient condition for the measure $\nu$ to satisfy the logarithmic Sobolev inequality.

\begin{theorem}[Bakry and Émery, \cite{BE}]\label{be} Let $(M^n,g,f)$ be a complete smooth metric measure space such that $\nu=e^{-f}dv_g$ is a probability measure and $Ric_f\geq \rho g$, for some $\rho>0$. Then $\nu$ satisfies  LSI($\rho$).
\end{theorem}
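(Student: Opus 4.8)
The plan is to prove the statement with the Bakry--Émery $\Gamma_2$-calculus, deducing the logarithmic Sobolev inequality from the exponential decay of the Fisher information along the heat semigroup generated by the weighted Laplacian. First I would introduce the \emph{drift Laplacian} $L = \Delta - \langle \nabla f, \nabla \cdot\rangle$, which is symmetric with respect to $\nu = e^{-f}dv_g$, together with its semigroup $P_t = e^{tL}$, and record the carré du champ operators
$$\Gamma(u) = |\nabla u|^2, \qquad \Gamma_2(u) = \tfrac{1}{2} L\,\Gamma(u) - \langle \nabla u, \nabla Lu\rangle.$$
The weighted Bochner formula gives $\Gamma_2(u) = |\nabla^2 u|^2 + \Rc_f(\nabla u, \nabla u)$, so the hypothesis $\Rc_f \geq \rho g$ immediately yields, since $|\nabla^2 u|^2 \geq 0$, the curvature--dimension inequality
$$\Gamma_2(u) \geq \rho\,\Gamma(u)$$
for all admissible $u$. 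This is the only geometric input to the argument; the same inequality also produces the Bakry--Émery spectral gap (Poincaré inequality with constant $\rho$), which I will invoke below.

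Next, given $\eta$ absolutely continuous with respect to $\nu$ with density $h = d\eta/d\nu$, I would reduce by truncation and approximation to the case in which $h$ is smooth and bounded between two positive constants, so that $\log u_t$ is admissible throughout. Setting $u_t = P_t h$, consider the entropy $\mathrm{Ent}(t) = \int_M u_t \log u_t\, d\nu$ and the Fisher information $I(t) = \int_M u_t\,\Gamma(\log u_t)\, d\nu$. Using the invariance of $\nu$ under $P_t$ and integration by parts one gets $\mathrm{Ent}'(t) = -\int_M \langle \nabla u_t, \nabla \log u_t\rangle\, d\nu = -I(t)$. Since $\rho > 0$ furnishes a spectral gap, $u_t \to \int_M h\, d\nu = 1$ and $\mathrm{Ent}(t)\to 0$ as $t\to\infty$, whence
$$H(\eta|\nu) = \mathrm{Ent}(0) = \int_0^\infty I(t)\, dt.$$

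The heart of the matter is the decay of $I(t)$. A standard computation in the $\Gamma_2$-calculus, using that $L$ is a diffusion operator (so $Le^v = e^v(Lv + \Gamma(v))$) together with integration by parts, yields the identity
$$I'(t) = -2\int_M u_t\,\Gamma_2(\log u_t)\, d\nu.$$
Inserting the curvature--dimension inequality applied to $\log u_t$ gives $I'(t) \leq -2\rho\, I(t)$, so by Grönwall $I(t) \leq e^{-2\rho t} I(0)$. Therefore
$$H(\eta|\nu) = \int_0^\infty I(t)\, dt \leq I(0)\int_0^\infty e^{-2\rho t}\, dt = \frac{1}{2\rho}\,I(0) = \frac{1}{2\rho}\,I(\eta|\nu),$$
since $I(0) = I(\eta|\nu)$ and $\mathrm{Ent}(0) = H(\eta|\nu)$. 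This is precisely LSI($\rho$).

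I expect the main obstacle to be analytic rather than geometric. On the noncompact manifold $M$ one must first establish that $P_t$ is well defined and conservative ($P_t 1 = 1$), which rests on the completeness of $g$ and the lower Ricci bound; that the densities $u_t$ stay smooth and bounded away from $0$ and $\infty$, so that $\log u_t$ and all the above manipulations are legitimate; and that the integrations by parts leave no boundary contribution at infinity, which requires controlling the decay of $u_t$ and $\nabla u_t$ against the finite measure $\nu$. The approximation step reducing a general $\eta$ to such well-behaved densities, the justification of differentiating $\mathrm{Ent}$ and $I$ under the integral sign, and the passage to the limit $t\to\infty$ are exactly the points where the finiteness of the moments of $\nu$ and the exponential controls coming from $\rho > 0$ enter; an alternative that sidesteps the long-time limit is the finite-horizon interpolation $s\mapsto P_s\!\left(u_{t-s}\log u_{t-s}\right)$ on $[0,t]$, whose derivative is again controlled by $\Gamma_2 \geq \rho\,\Gamma$.
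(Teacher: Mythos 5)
The paper offers no proof of this statement at all---it is quoted as a known result with a citation to Bakry--\'Emery \cite{BE}---and your $\Gamma_2$-calculus argument (the curvature--dimension inequality $\Gamma_2 \geq \rho\,\Gamma$ from the weighted Bochner formula, de Bruijn's identity $\mathrm{Ent}'(t) = -I(t)$, exponential decay of the Fisher information, and integration in $t$) is precisely the classical semigroup proof from that reference, so your proposal is correct and takes essentially the same route as the paper's source. You also correctly isolate the genuine analytic work in the noncompact setting (conservativeness of $P_t$, absence of boundary terms in the integrations by parts, and the truncation/approximation step for general densities), including the standard finite-horizon interpolation $s \mapsto P_s\bigl(u_{t-s}\log u_{t-s}\bigr)$ that sidesteps the long-time limit.
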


As a consequence of theorems \ref{ov} and \ref{be} we have the following result.

\begin{theorem}\label{ovbe}Let $(M^n,g,f)$ be a complete smooth metric measure space such that $\nu=e^{-f}dv_g$ is a probability measure with finite second moments and $Ric_f\geq \rho g$, for some $\rho>0$. Then $\nu$ satisfies T($\rho$), i.e., for any probability measure with finite second moments $\eta$ absolutely continuous with respect to $\nu$, we have 
\begin{equation}\label{ww}W_g^2(\eta,\nu)\leq \frac{2}{\rho}\int_{M}\log \left(\frac{d \eta}{d\nu}\right)d\eta.\end{equation}
\end{theorem}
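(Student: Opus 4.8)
The plan is to derive Theorem \ref{ovbe} as an immediate logical consequence of the two theorems that precede it, namely Theorem \ref{be} (Bakry--Émery) and Theorem \ref{ov} (Otto--Villani), simply by chaining the two implications. Concretely, I would argue as follows. We are given a complete smooth metric measure space $(M^n,g,f)$ such that $\nu=e^{-f}dv_g$ is a probability measure with finite second moments and $\Rc_f\geq \rho g$ for some $\rho>0$.

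First I would invoke Theorem \ref{be}. Its hypotheses are precisely that $\nu=e^{-f}dv_g$ is a probability measure and $\Rc_f\geq \rho g$ with $\rho>0$; both are satisfied. Therefore $\nu$ satisfies LSI($\rho$), the logarithmic Sobolev inequality with constant $\rho$. Next I would feed this into Theorem \ref{ov}. That theorem requires $\nu$ to be a probability measure with finite second moments, $\Rc_f\geq cg$ for some $c\in\mathbb{R}$, and that $\nu$ satisfy LSI($\rho$) for some $\rho>0$; it then concludes that $\nu$ satisfies T($\rho$). The lower bound $\Rc_f\geq \rho g$ with $\rho>0$ gives the bound $\Rc_f\geq cg$ with $c=\rho$ (indeed any positive lower bound is in particular a real lower bound), the finite second moments are assumed, and LSI($\rho$) was just established. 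Hence all hypotheses of Theorem \ref{ov} hold, and $\nu$ satisfies T($\rho$).

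Finally I would unwind the definition of the Talagrand inequality T($\rho$). By definition, T($\rho$) means that for every probability measure $\eta\in\mathcal{P}_2(M,g)$ absolutely continuous with respect to $\nu$ one has
$$W_g^2(\eta,\nu)\leq \frac{2}{\rho}H(\eta|\nu),$$
and substituting the definition of the relative entropy $H(\eta|\nu)=\int_M\log\!\left(\frac{d\eta}{d\nu}\right)d\eta$ yields exactly inequality (\ref{ww}). This completes the proof.

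I do not expect any genuine obstacle here, since the statement is a formal corollary obtained by composing two already-quoted theorems; the only point requiring a moment's care is verifying that the hypotheses match up across the two results. The single subtlety worth flagging explicitly is the compatibility of the curvature assumptions: Theorem \ref{be} needs the \emph{strictly positive} lower bound $\rho>0$ (this is what produces a finite Sobolev constant), whereas Theorem \ref{ov} only needs \emph{some} real lower bound $c$; both are subsumed by the single hypothesis $\Rc_f\geq\rho g$ with $\rho>0$, so no additional assumption is needed and the same constant $\rho$ propagates through both steps to give T($\rho$).
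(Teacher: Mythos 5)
Your proposal is correct and is exactly the paper's own argument: the paper states Theorem \ref{ovbe} ``as a consequence of theorems \ref{ov} and \ref{be},'' i.e.\ it applies Bakry--\'Emery to get LSI($\rho$) and then Otto--Villani to upgrade this to T($\rho$), just as you do. Your explicit check that the curvature hypothesis $\Rc_f\geq\rho g$ with $\rho>0$ simultaneously satisfies both theorems' requirements is the only point of care, and you handled it correctly.
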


\medskip

For more details on the Wasserstein distance see \cite{villani}.

\medskip

\section{Finiteness of $k$-moments}\label{sec3}

\medskip

In this section, we consider a complete smooth metric measure space $(M^n,g,f)$ with $\textrm{Ric}_f\geq \rho g$, for some $\rho\in \mathbb{R}$. We will use the technique for the volume growth for geodesic balls developed in \cite{exd} and the Munteanu-Wang's estimate for the potential function $f$ obtained in \cite{munteanu}. As a special case we have the following result.

\begin{theorem}\label{cz} Let $(M^n,g,f)$ be a complete smooth metric measure space such that $\textrm{Ric}_f\geq \rho g$, for some constant $\rho>0$. Then $\nu=e^{-f}dv_g$ has finite $k$-moments with respect to $g$, for every $k>0$.
\end{theorem}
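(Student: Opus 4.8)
The plan is to reduce the statement to a decay estimate for the weighted mass outside large geodesic balls, and then to recover every moment by a layer-cake argument. Fix $p\in M$ and write $r=d_g(p,\cdot)$. Since $e^{-f}$ is smooth, $\nu$ is finite on every compact set, so the only issue is the behaviour of the integrand at infinity. By Fubini's theorem,
\[
\int_M r^k\,d\nu \;=\; k\int_0^\infty t^{k-1}\,\nu\bigl(M\setminus B_t(p)\bigr)\,dt ,
\]
and hence it suffices to prove that for every $m>0$ one has $\nu(M\setminus B_t(p))\le C_m\,t^{-m}$ for $t$ large; equivalently, that the weighted volume of the exterior of balls decays super-polynomially.

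First I would invoke the Munteanu–Wang estimate from \cite{munteanu}. Under the sole hypothesis $\textrm{Ric}_f\ge \rho g$ with $\rho>0$, this yields a lower bound for the potential of the form $f(x)\ge \varphi(r(x))$, where $\varphi$ grows in $r$ (at least linearly, and in the model situation quadratically); in particular there are constants $c_1>0$ and $c_2\ge 0$, depending only on $n$, $\rho$ and the geometry of $g$ on $B_1(p)$, so that $e^{-f(x)}\le e^{c_2}\,e^{-\varphi(r(x))}$ for all $x$. This is what forces the density of $\nu$ to decay at infinity.

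Second I would control the \emph{unweighted} volume growth of geodesic balls by means of the technique of Cheng, Ribeiro and Zhou in \cite{exd}. The key point is that, although the Ricci curvature itself need not be bounded below, the hypothesis $\textrm{Ric}_f\ge\rho g$ together with the growth of $f$ gives, through the weighted Laplacian comparison for $\Delta_f r$, an effective upper bound for $\Vol(B_t(p))$. Decomposing $M\setminus B_N(p)$ into the unit annuli $A_j=B_{j+1}(p)\setminus B_j(p)$ and estimating
\[
\nu(A_j)\;\le\;\Bigl(\sup_{A_j}e^{-f}\Bigr)\Vol\bigl(B_{j+1}(p)\bigr)\;\le\;e^{c_2}\,e^{-\varphi(j)}\,\Vol\bigl(B_{j+1}(p)\bigr),
\]
the combination of the two ingredients shows that each $\nu(A_j)$ decays faster than any power of $j$, whence so does $\nu(M\setminus B_t(p))=\sum_{j\ge \lfloor t\rfloor}\nu(A_j)$. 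Substituting this into the layer-cake identity proves $\int_M r^k\,d\nu<\infty$ for every $k>0$.

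I expect the main obstacle to be the second step: reconciling the volume growth of geodesic balls with the decay $e^{-\varphi}$ of the density. Since ordinary Ricci curvature is not assumed bounded below, the naive Bishop–Gromov comparison is unavailable, and one must instead exploit the full strength of the Bakry–Émery comparison — controlling $\Delta_f r$ and integrating it — so that the resulting bound on $\Vol(B_t(p))$ is genuinely dominated by $e^{\varphi(t)}$. This is precisely the delicate estimate carried out in \cite{exd}, and checking that its output is compatible with the lower bound for $f$ furnished by \cite{munteanu} is the crux of the argument.
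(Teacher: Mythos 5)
Your layer-cake reduction and your appeal to the Munteanu--Wang lower bound are both sound, but your second step has a genuine gap, and it is exactly where you yourself locate the crux. Under the sole hypothesis $\textrm{Ric}_f\geq \rho g$ there is \emph{no} upper bound for the unweighted volume $\Vol(B_t(p))$ that is dominated by $e^{\varphi(t)}$ when $\varphi$ is the quadratic lower bound $f\geq \frac{\rho}{2}r^2-Cr$ of \cite{munteanu}: the unweighted volume of balls in a smooth metric measure space with $\textrm{Ric}_f\geq\rho g$ can grow strictly faster than $e^{\frac{\rho}{2}t^2}$. For instance, take $g=dr^2+\phi(r)^2g_{\mathbb{S}^{n-1}}$ on $\mathbb{R}^n$ with $\phi(r)=e^{\alpha r^2}$ for large $r$ and a radial potential with $f'(r)\sim\beta r^3$; for $\beta$ large one checks $\textrm{Ric}_f\geq\rho g$, while $\Vol(B_t(p))\gtrsim e^{(n-1)\alpha t^2}$. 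Choosing $(n-1)\alpha>\frac{\rho}{2}$, your annulus estimate
$$\nu(A_j)\leq \Bigl(\sup_{A_j}e^{-f}\Bigr)\Vol\bigl(B_{j+1}(p)\bigr)\leq e^{-\frac{\rho}{2}j^2+Cj}\,\Vol\bigl(B_{j+1}(p)\bigr)$$
diverges, because the Munteanu--Wang bound is only quadratic (it is sharp on the Gaussian and cannot be upgraded), while the true decay of the weight in this example is $e^{-f}\sim e^{-\beta r^4/4}$. Decoupling $\sup_{A_j}e^{-f}$ from $\Vol(B_{j+1}(p))$ throws away precisely the correlation between fast volume growth and fast growth of $f$ that makes $\nu$ small; note also that \cite{exd} does not supply the unweighted bound you attribute to it (its soliton volume estimates use identities such as $R_g+|\nabla f|^2=f-\mu_g$, unavailable here).

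The paper's proof (Proposition \ref{t1}) avoids this decoupling by estimating the \emph{weighted} area element pointwise in polar coordinates. Integrating the Riccati inequality twice, with $\textrm{Ric}_g(\partial_t,\partial_t)\geq\rho-f''(t)$ along radial geodesics and integration by parts on the $f''$ term, yields inequality (\ref{E}):
$$\log\left(\frac{\mathcal{A}(r,\theta)}{r^{n-1}}\right)\leq -\frac{\rho}{6}r^2+f(r,\theta)-\frac{2}{r}\int_0^r f(s,\theta)\,ds+f(p),$$
in which the term $+f(r,\theta)$ cancels \emph{exactly} against the weight $e^{-f(r,\theta)}$, so no upper bound on $f$ or on the unweighted volume is ever needed. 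The Munteanu--Wang estimate enters only through the mean-value term $-\frac{2}{r}\int_0^r f(s,\theta)\,ds$, where a lower bound on $f$ has the favorable sign and contributes an additional $-\frac{\rho}{3}r^2+Cr$. Together these give the uniform Gaussian bound $\mathcal{A}(r,\theta)e^{-f(r,\theta)}\leq r^{n-1}e^{-\frac{\rho}{2}r^2+Cr+\lambda+f(p)}$, and integrating $\varphi(r)=r^k$ against this bound finishes the proof. If you wish to salvage your annulus scheme, you would have to prove the correlated bound $\int_{A_j}e^{-f}dv_g\leq Cj^{n-1}e^{-\frac{\rho}{2}j^2+Cj}$ directly, which is in substance the weighted pointwise estimate above rather than a product of two one-sided estimates.
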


\begin{remark}\label{rfm} It follows from Theorem \ref{cz} that the hypothesis of finiteness of the second moments under measure $\nu$ on Theorem \ref{ov} is superfluous for (\ref{ww}). So combining the Theorems \ref{ovbe} and \ref{cz}, we have the following result. 
\end{remark}

\begin{theorem}[Theorem \ref{ttov}]\label{ovbecz}Let $(M^n,g,f)$ be a complete smooth metric measure space such that $\nu=e^{-f}dv_g$ is a probability measure and $\textrm{Ric}_f\geq \rho g$, for some $\rho>0$. Then $\nu$ has finite second moments, and for any probability measure with finite second moments $\eta$ absolutely continuous with respect to $\nu$, we have 
$$W_g^2(\eta,\nu)\leq \frac{2}{\rho}\int_{M}\log \left(\frac{d \eta}{d\nu}\right)d\eta.$$
\end{theorem}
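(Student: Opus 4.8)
The plan is to deduce the statement by assembling the earlier results and to isolate the one genuinely new ingredient. The Talagrand inequality itself requires no new idea: by Theorem~\ref{be} the hypothesis $\textrm{Ric}_f\geq\rho g$ with $\rho>0$ already forces $\nu=e^{-f}dv_g$ to satisfy the logarithmic Sobolev inequality $\textrm{LSI}(\rho)$, and by Theorem~\ref{ov} an $\textrm{LSI}(\rho)$ measure with a Bakry-Émery lower bound and finite second moments also satisfies the Talagrand inequality $\textrm{T}(\rho)$, which is exactly inequality (\ref{ww}). Thus, modulo the integrability hypothesis, Theorem~\ref{ovbe} gives the conclusion verbatim. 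The only gap to close is the standing assumption in Theorems~\ref{ov} and~\ref{ovbe} that $\nu\in\mathcal{P}_2(M,g)$: I would remove it by showing directly that $\textrm{Ric}_f\geq\rho g$ with $\rho>0$ already forces $\nu$ to have finite second moments, i.e.\ by proving Theorem~\ref{cz}. Once Theorem~\ref{cz} is in hand, $\nu\in\mathcal{P}_k(M,g)$ for every $k>0$, in particular $\nu\in\mathcal{P}_2(M,g)$, and the combination of Theorems~\ref{be} and~\ref{ov} applies without further hypotheses, yielding the inequality.

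So the heart of the matter is the finiteness of the moments, and this is where I expect the real work to lie. To prove Theorem~\ref{cz} I would fix a point $p\in M$, write $r=d_g(p,\cdot)$, and use the layer-cake (coarea) representation
\begin{equation*}
\int_M r^k\,d\nu = k\int_0^\infty s^{k-1}\,\nu\big(M\setminus B_s(p)\big)\,ds,
\end{equation*}
so that it suffices to show the tail mass $\nu(M\setminus B_s(p))=\int_{M\setminus B_s(p)}e^{-f}\,dv_g$ decays faster than any negative power of $s$. The strategy is to dominate this tail by a genuinely Gaussian quantity and then to beat the volume growth with the Gaussian decay.

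Two inputs drive the estimate. First, the Munteanu-Wang estimate for the potential function (from \cite{munteanu}) supplies a quadratic lower bound, schematically $f(x)\geq \tfrac{\rho}{4}\,r(x)^2 - c\,r(x)-c'$ for suitable constants, hence Gaussian-type decay $e^{-f}\lesssim e^{c'}e^{c r}e^{-\frac{\rho}{4}r^2}$. Second, the volume-growth technique of Cheng, Ribeiro and Zhou \cite{exd} controls $\textrm{Vol}(B_s(p))$ from above (at most exponentially in $s$, with a linear exponent). Decomposing $M\setminus B_s(p)$ into the annuli $B_{j+1}(p)\setminus B_j(p)$, bounding $e^{-f}$ on the $j$-th annulus by its Gaussian majorant and the annulus volume by the comparison estimate, and summing in $j$, the super-exponential Gaussian factor $e^{-\rho j^2/4}$ dominates both the polynomial weight $s^{k-1}$ and the at-most-exponential volume growth, so the series, and hence the moment integral, converges for every $k>0$.

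The main obstacle is making this domination uniform and quantitative: one must check that the constants in the Munteanu-Wang lower bound for $f$ and in the volume comparison from \cite{exd} are compatible, so that the product of the volume growth and the Gaussian decay is integrable against $s^{k-1}$ for all $k$ simultaneously. In particular, since only $\textrm{Ric}_f\geq\rho g$ is assumed and not the full soliton structure, the quadratic growth of $f$ and the volume bound cannot be read off from the Cao-Zhou shrinker estimates and must instead be extracted from the weighted comparison machinery; reconciling these two estimates, rather than the final assembly of Theorems~\ref{be}, \ref{ov} and~\ref{cz}, is the delicate point.
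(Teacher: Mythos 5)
Your assembly of the theorem itself is exactly the paper's: Remark \ref{rfm} combines Theorems \ref{be} and \ref{ov} and reduces everything to the moment finiteness of Theorem \ref{cz}, and your layer-cake reduction of Theorem \ref{cz} to Gaussian decay of the tails $\nu(M\setminus B_s(p))$ is legitimate. The gap is in your second ``input'': under $\textrm{Ric}_f\geq \rho g$ alone there is \emph{no} upper bound on the unweighted volume $\textrm{Vol}(B_s(p))$, exponential or otherwise, in terms of $\rho$ and $n$. What \cite{exd} and the Wei--Wylie machinery control is the \emph{weighted} volume $\nu(B_R(p))$ (this is precisely the corollary of Proposition \ref{t1} in the paper); an unweighted comparison would require an upper bound on $f$ or a lower bound on $\partial_r f$, neither of which is available, since $f$ admits no general upper bound here. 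Concretely, take $M=\mathbb{R}^2$ with a rotationally symmetric metric $dr^2+\varphi(r)^2d\theta^2$ where $\varphi(r)=e^{r^2}$ for large $r$, and $f\approx r^4$: then $\textrm{Ric}_f(\partial_r,\partial_r)=-\varphi''/\varphi+f''=8r^2-2$ and $\textrm{Ric}_f(\text{tangential})=-\varphi''/\varphi+f'\varphi'/\varphi=8r^4-4r^2-2$, so $\textrm{Ric}_f\geq \rho g$ for suitably smoothed data, yet $\textrm{Vol}(B_s)\sim\int^s e^{r^2}dr$ grows superexponentially. Your decoupled annulus estimate $\sup e^{-f}\cdot \textrm{Vol}(B_{j+1}\setminus B_j)\lesssim e^{-\frac{\rho}{2}j^2+Cj}\cdot e^{(j+1)^2}$ then diverges whenever the volume exponent beats $\rho/2$. (The example's tails do decay, but only because the true $f$ is quartic --- information your two generic inputs throw away.) So the ``delicate point'' you flag, compatibility of constants between Munteanu--Wang and a volume bound, cannot be fixed by choosing constants: the decoupling itself fails.

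The missing idea, and what Proposition \ref{t1} actually does, is to estimate the weighted area element $\mathcal{A}(r,\theta)e^{-f(r,\theta)}$ as a single object. Integrating the Riccati/Bochner comparison twice along minimal geodesics, with $\textrm{Ric}_g(\partial_s,\partial_s)\geq \rho-f''(s)$, gives
$$\log\left(\frac{\mathcal{A}(r,\theta)}{r^{n-1}}\right)\leq -\frac{\rho}{6}r^2+f(r,\theta)-\frac{2}{r}\int_0^r f(s,\theta)\,ds+f(p),$$
and the crucial point is that the term $+f(r,\theta)$ cancels \emph{exactly} against the weight $e^{-f(r,\theta)}$. The Munteanu--Wang lower bound $f\geq \frac{\rho}{2}r^2-Cr$ (you wrote $\frac{\rho}{4}r^2$; the coefficient is $\frac{\rho}{2}$, though this is immaterial) is then applied not to $e^{-f}$ pointwise but to the \emph{average} $\frac{2}{r}\int_0^r f(s,\theta)\,ds$, contributing $-\frac{\rho}{3}r^2+Cr+\lambda$, whence $\mathcal{A}(r,\theta)e^{-f(r,\theta)}\leq r^{n-1}e^{-\frac{\rho}{2}r^2+Cr+\lambda+f(p)}$ with no hypothesis on $\textrm{Vol}$ at all. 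Integrating this in polar coordinates against any increasing weight $\varphi(r)$ (in particular $r^k$) gives Proposition \ref{t1} and hence Theorem \ref{cz}; your layer-cake step and the final assembly via Theorems \ref{be} and \ref{ov} then go through as you describe.
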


Theorem \ref{cz} is a consequence of the following general result.

\begin{proposition}\label{t1}
Let $(M^n,g,f)$ be a complete smooth metric measure space with $\textrm{Ric}_f\geq \rho g$, for some $\rho\in \mathbb{R}$. Fix a point $p\in M$ and consider the distance function $r(x)=d_g(x,p)$, $x\in M$. Then, there exist constants $r_0, A, B, C>0$ such that
$$\int_{B_R(p)}(\varphi\circ r) e^{-f}dv_g \leq A\varphi(r_0)+B\int_{r_0}^R \varphi(r) r^{n-1}e^{-\frac{\rho r^2}{2}+Cr}dr$$
\noindent for every increasing function $\varphi$ and $R\geq r_0$.
\end{proposition}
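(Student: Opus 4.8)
The plan is to work in geodesic polar coordinates centred at $p$ and to control the weighted volume element radially. Writing $dv_g=\mathcal J(r,\theta)\,dr\,d\theta$ on the star-shaped domain $\{r\theta:0\le r<c(\theta)\}\subset T_pM$ (here $d\theta$ is the standard measure on the unit sphere $S^{n-1}$ and $c(\theta)$ is the distance to the cut locus along $\theta$), I set $J(r,\theta)=e^{-f(\exp_p(r\theta))}\mathcal J(r,\theta)$, so that $\partial_r\log J=\Delta r-\langle\nabla f,\partial_r\rangle=\Delta_f r$, the $f$-Laplacian of the distance function. Since the cut locus has measure zero,
\[
\int_{B_R(p)}(\varphi\circ r)\,e^{-f}\,dv_g=\int_{S^{n-1}}\int_0^{\min(R,c(\theta))}\varphi(r)\,J(r,\theta)\,dr\,d\theta .
\]
Because $\varphi$ is increasing (and, for the application, nonnegative), the contribution of $B_{r_0}(p)$ is at most $\varphi(r_0)\int_{B_{r_0}(p)}e^{-f}dv_g=:A\varphi(r_0)$, so the whole problem reduces to a pointwise upper bound for $J(r,\theta)$ in the range $r_0\le r<c(\theta)$.

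The heart of the argument is the weighted mean-curvature comparison underlying the volume-growth technique of \cite{exd}. Applying the weighted Bochner formula to $u=r$, where $|\nabla r|\equiv1$ forces $\tfrac12\Delta_f|\nabla r|^2=0$, and using $|\nabla^2 r|^2\ge(\Delta r)^2/(n-1)\ge0$ together with $\textrm{Ric}_f\ge\rho g$, I obtain along each radial geodesic
\[
\partial_r(\Delta_f r)=-|\nabla^2 r|^2-\textrm{Ric}_f(\partial_r,\partial_r)\le-\rho .
\]
Integrating once from $r_0$ gives $\Delta_f r(r,\theta)\le\Delta_f r(r_0,\theta)-\rho(r-r_0)$, and integrating $\partial_r\log J=\Delta_f r$ a second time gives
\[
J(r,\theta)\le J(r_0,\theta)\,\exp\!\Bigl(\Delta_f r(r_0,\theta)\,(r-r_0)-\tfrac{\rho}{2}(r-r_0)^2\Bigr)
\]
for all $r_0\le r<c(\theta)$. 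Expanding the exponent already exhibits the decisive Gaussian factor $e^{-\rho r^2/2}$ together with a linear correction.

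It then remains to bound the initial data $J(r_0,\theta)$ and $\Delta_f r(r_0,\theta)$ uniformly in $\theta$. Over the compact ball $\overline{B_{r_0}(p)}$ the function $f$ is bounded below and $|\nabla f|$ is bounded above — this is where the Munteanu–Wang estimate for $f$ enters — while a lower bound on the sectional curvature on $\overline{B_{r_0}(p)}$ yields, through the classical Jacobian and Laplacian comparisons, uniform upper bounds $\mathcal J(r_0,\theta)\le C_0$ and $\Delta r(r_0,\theta)\le C_3$ (directions with $c(\theta)\le r_0$ contribute nothing for $r\ge r_0$, and conjugate directions only make these quantities more negative). Combining these gives $J(r_0,\theta)\le C_1$ and $\Delta_f r(r_0,\theta)\le C_4$ independently of $\theta$, hence a uniform bound $J(r,\theta)\le C_1'\,e^{-\rho r^2/2+Cr}$ with $C=\rho r_0+C_4$. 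Substituting into the polar integral, extending the inner integration to $R$ and the outer one to all of $S^{n-1}$ (both licit since the integrand is nonnegative), and using $r^{n-1}\ge r_0^{n-1}$ on $[r_0,R]$ to absorb the constant $r_0^{-(n-1)}$ and insert the harmless factor $r^{n-1}$, produces exactly
\[
\int_{B_R(p)\setminus B_{r_0}(p)}(\varphi\circ r)\,e^{-f}\,dv_g\le B\int_{r_0}^R\varphi(r)\,r^{n-1}e^{-\rho r^2/2+Cr}\,dr ,
\]
which together with the $A\varphi(r_0)$ term is the claimed inequality. I expect the only genuine obstacle to be precisely this uniform control of the initial data at radius $r_0$ across all directions, i.e.\ ruling out blow-up of $\mathcal J(r_0,\theta)$ and $\Delta_f r(r_0,\theta)$ as $\theta$ approaches directions whose cut distance tends to $r_0$; the comparison estimates on the compact ball, fed by the Munteanu–Wang bound on $f$, are exactly what make this control uniform.
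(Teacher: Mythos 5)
Your proof is correct, and it takes a genuinely different route from the paper's. You run the weighted Riccati comparison in the style of Wei--Wylie: the Bochner formula applied to $r$ gives $\partial_r(\Delta_f r)\le -\rho$ directly, you integrate $\partial_r \log J=\Delta_f r$ twice starting from $r=r_0$, and you then control the initial data $J(r_0,\theta)$ and $\Delta_f r(r_0,\theta)$ uniformly in $\theta$ by classical comparison on the compact ball $\overline{B_{r_0}(p)}$. The paper instead works with the \emph{unweighted} Riccati inequality for the mean curvature $H(r,\theta)$, multiplies by $s^2$ and integrates twice from the origin, so that no initial-data analysis at a positive radius is ever needed and the factor $r^{n-1}$ emerges naturally rather than being inserted via $r^{n-1}\ge r_0^{n-1}$; the price is the term $-\frac{2}{r}\int_0^r f(s,\theta)\,ds$, and it is there -- globally, along the whole geodesic -- that the Munteanu--Wang lower bound $f\ge\frac{\rho}{2}r^2-Cr$ genuinely enters. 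By contrast, your argument does not need Munteanu--Wang at all: boundedness of $f$ and $|\nabla f|$ on $\overline{B_{r_0}(p)}$ follows from smoothness and compactness alone, so your attribution of the compact-ball bounds to Munteanu--Wang is a harmless misattribution, and in this respect your proof is more elementary. What the paper's double-integration-from-the-origin buys is reuse: the identical computation, run in the shrinker setting in the proof of Theorem \ref{teog}, yields the sharp pointwise bound $\mathcal{A}(r,\theta)\le r^{n-1}e^{f(p)-\mu_g}$ of (\ref{fr}), with the mean value $\frac{1}{r}\int_0^r R_g(s,\theta)\,ds$ appearing explicitly, which is exactly the input needed for the rigidity analysis; your comparison from $r_0$ would not directly produce that sharp form. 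One shared convention worth flagging: both arguments implicitly use $\varphi\ge 0$ on $[r_0,R]$ when extending the polar integration past the cut locus and multiplying the Jacobian bound by $\varphi(r)$ -- you note this parenthetically, and it is satisfied in all applications ($\varphi(r)=r^k$).
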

\begin{proof} It is natural to use the exponential map at $p$ as many authors have used to write, in terms of polar normal coordinates at $p$, the volume element as
$$dv_g=\mathcal{A}(r,\theta)dr d\theta$$
\noindent where $d\theta$  is the area element of the unit $(n-1)$-dimensional sphere $\mathbb{S}^{n-1}$ (see for example \cite{li}, page 10). Denote by $S_r(p)$ the geodesic sphere on $(M,g)$ of radius $r$ centered at $p$. The area element of $S_r(p)$ is given by $\mathcal{A}(r,\theta)d\theta$. It follows from first variation of the area that if $x=(r,\theta)$ is not in the cut-locus of $p$ then
\begin{equation}\label{x01}\frac{d}{dr}\log(\mathcal{A}(r,\theta))=H(r,\theta),\end{equation}
\noindent where $H(r,\theta)$ denote the mean curvature of $S_r(p)$ at the point $x=(r,\theta)$ with inward pointing normal
vector. By Bochner formula and the Schwarz inequality, we have
$$\frac{d}{dr}H(r,\theta)+\frac{H^2(r,\theta)}{n-1}+\textrm{Ric}_g\left(\frac{\partial}{\partial r},\frac{\partial}{\partial r}\right)(r,\theta)\leq 0.$$

Consequently, for $0<\epsilon\leq r$ we obtain
$$\int_{\epsilon}^rs^2\frac{d}{ds}H(s,\theta)ds+\frac{1}{n-1}\int_{\epsilon}^rs^2H^2(s,\theta)ds+\int_{\epsilon}^rs^2\textrm{Ric}_g\left(\frac{\partial}{\partial s},\frac{\partial}{\partial s}\right)ds\leq 0.$$

Using integration by parts on the first term and making $\epsilon \to 0$ we have
\begin{eqnarray*}
r^2H(r,\theta) &\leq &\int_{0}^r\left(2sH(s,\theta)-\frac{s^2H^2(s,\theta)}{n-1}\right)ds-\int_{0}^rs^2\textrm{Ric}_g\left(\frac{\partial}{\partial s},\frac{\partial}{\partial s}\right)ds\\
& = &-\frac{1}{n-1}\int_{0}^r[sH(s,\theta)-(n-1)]^2ds+\int_{0}^r\left[(n-1)-s^2\textrm{Ric}_g\left(\frac{\partial}{\partial s},\frac{\partial}{\partial s}\right)\right]ds\\
& \leq & \int_{0}^r\left[(n-1)-s^2\textrm{Ric}_g\left(\frac{\partial}{\partial s},\frac{\partial}{\partial s}\right)\right]ds.\\
\end{eqnarray*}

This implies that
\begin{equation}\label{x11}
H(r,\theta)\leq \frac{1}{r^2}\int_{0}^r\left[(n-1)-s^2\textrm{Ric}_g\left(\frac{\partial}{\partial s},\frac{\partial}{\partial s}\right)\right]ds.
\end{equation}

Note that, by (\ref{x01}) and (\ref{x11}) we  have
\begin{equation}\label{x21}
\frac{d}{dr}\log\left(\frac{\mathcal{A}(r,\theta)}{r^{n-1}}\right)\leq -\frac{1}{r^2}\int_{0}^rs^2\textrm{Ric}_g\left(\frac{\partial}{\partial s},\frac{\partial}{\partial s}\right)ds.
\end{equation}

Integrating (\ref{x21}) from $\epsilon>0$ to $r$ results in
$$\log\left(\frac{\mathcal{A}(r,\theta)}{r^{n-1}}\right)-\log\left(\frac{\mathcal{A}(\epsilon,\theta)}{\epsilon^{n-1}}\right)\leq -\int_{\epsilon}^r\left[\frac{1}{s^2}\int_{0}^st^2\textrm{Ric}_g\left(\frac{\partial}{\partial t},\frac{\partial}{\partial t}\right)dt\right]ds.$$

Since $\lim_{\epsilon\to 0}\log\left(\frac{\mathcal{A}(\epsilon,\theta)}{\epsilon^{n-1}}\right)=1$, using integration by parts on the last term and making $\epsilon \to 0$ we obtain
\begin{equation}\label{x31}
\log\left(\frac{\mathcal{A}(r,\theta)}{r^{n-1}}\right)\leq  \frac{1}{r}\int_{0}^rs^2\textrm{Ric}_g\left(\frac{\partial}{\partial s},\frac{\partial}{\partial s}\right)ds-\int_{0}^rs\textrm{Ric}_g\left(\frac{\partial}{\partial s},\frac{\partial}{\partial s}\right)ds.
\end{equation}

It follows from (\ref{x21}) and (\ref{x31}) that
$$\frac{d}{dr}\left(r\log\left(\frac{\mathcal{A}(r,\theta)}{r^{n-1}}\right)\right)\leq  -\int_{0}^rs\textrm{Ric}_g\left(\frac{\partial}{\partial s},\frac{\partial}{\partial s}\right)ds.$$

For $0<\epsilon<r$, we have 
$$r\log\left(\frac{\mathcal{A}(r,\theta)}{r^{n-1}}\right) -\epsilon\log\left(\frac{\mathcal{A}(\epsilon,\theta)}{\epsilon^{n-1}}\right)\leq -\int_{\epsilon}^r \int_{0}^s t\textrm{Ric}_g\left(\frac{\partial}{\partial t},\frac{\partial}{\partial t}\right)dt ds.$$

Let $\epsilon \to 0$. Then, we obtain
\begin{equation}\label{x41}
\log\left(\frac{\mathcal{A}(r,\theta)}{r^{n-1}}\right)\leq -\frac{1}{r}\int_{0}^r \int_{0}^s t\textrm{Ric}_g\left(\frac{\partial}{\partial t},\frac{\partial}{\partial t}\right)dt ds.
\end{equation}

Consider $x=(r,\theta)\in M$ and $\gamma:[0,r]\rightarrow M$ a minimizing unit geodesic with $\gamma(0)=p$ and $\gamma(r)=x$. Denote by $f(t)=f\circ \gamma (t)$, $t\in [0,r]$. Since $Ric_f\geq \rho g$, we have that
$$t\textrm{Ric}_g\left(\frac{\partial}{\partial t},\frac{\partial}{\partial t}\right) \geq t\rho -tf''(t).$$

It follows that
$$\int_0^s t\textrm{Ric}_g\left(\frac{\partial}{\partial t},\frac{\partial}{\partial t}\right)dt \geq \rho \int_0^st dt-\int_0^stf''(t) dt = \frac{\rho}{2}s^2-sf'(s)+f(s)-f(0).$$

This implies that
\begin{eqnarray*}
\int_0^r\int_0^s t \textrm{Ric}_g\left(\frac{\partial}{\partial t},\frac{\partial}{\partial t}\right) dt ds &\geq & \frac{\rho}{2}\int_0^rs^2 ds-\int_0^rsf'(s) ds+\int_0^rf(s)ds-f(0)r\\
&=&\frac{\rho}{6}r^3-rf(r)+2\int_0^rf(s) ds-f(0)r\\
&=& \frac{\rho}{6}r^3-rf(r,\theta)+2\int_0^rf(s,\theta) ds-f(p)r.
\end{eqnarray*}

By inequality (\ref{x41}), we have 
\begin{equation}\label{E}\log\left(\frac{\mathcal{A}(r,\theta)}{r^{n-1}}\right)\leq -\frac{\rho}{6}r^2+f(r,\theta)-\frac{2}{r}\int_0^rf(s,\theta) ds+f(p).\end{equation}

Since $\textrm{Ric}_f\geq \rho g$, it follows from \cite{munteanu} that there exist constants $C,r_0>0$ such that
\begin{equation}\label{EE}
f(x)\geq \frac{\rho}{2}r^2(x)-Cr(x)
\end{equation}
\noindent for every $x\in M$ with $r(x)\geq r_0$. Denote by $\lambda_0=\inf_{B_{r_0}(p)} f$. Observe that by (\ref{EE}), for $r>r_0$, we have
\begin{eqnarray*}
\int_0^r f(s,\theta)ds &=& \int_0^{r_0}f(s,\theta)ds+\int_{r_0}^r f(s,\theta)ds\\
&\geq & r_0\lambda_0+\frac{\rho}{2}\int_{r_0}^rs^2 ds-C\int_{r_0}^r s ds\\
&=& \frac{r_0}{2}\left(2\lambda_0-\frac{\rho r_0^2}{3}+Cr_0\right)+\frac{\rho}{6}r^3-\frac{C}{2}r^2
\end{eqnarray*}

It follows that, 
$$-\frac{2}{r}\int_0^r f(s,\theta)ds\leq -\frac{r_0\lambda_1}{r}-\frac{\rho}{3}r^2+Cr$$
\noindent for every $r>r_0$, where $\lambda_1:=2\lambda_0-\frac{\rho r_0^2}{3}+Cr_0$. Consequently,
\begin{equation}\label{e4}
-\frac{2}{r}\int_0^r f(s,\theta)ds\leq -\frac{\rho}{3}r^2+Cr+\lambda
\end{equation}
\noindent for every $r > r_0$, where $\lambda$ is a nonnegative constant. Substituting (\ref{e4}) into (\ref{E}) results in 
$$\log\left(\frac{\mathcal{A}(r,\theta)}{r^{n-1}}\right)\leq -\frac{\rho}{2}r^2+f(r,\theta)+Cr+\lambda+f(p).$$

This implies that
$$\mathcal{A}(r,\theta)\leq r^{n-1}e^{-\frac{\rho}{2}r^2+f(r,\theta)+Cr+\lambda+f(p)}.$$

Hence
\begin{equation}\label{e2}
\mathcal{A}(r,\theta)e^{-f(r,\theta)}\leq r^{n-1}e^{-\frac{\rho}{2}r^2+Cr+\lambda+f(p)}.
\end{equation}

Therefore, by (\ref{e2}), for every increasing function $\varphi$ and $R\geq r_0$ we have
\begin{eqnarray*}
\int_{B_R(p)} (\varphi\circ r) e^{-f}dv_g &=& \int_{B_{r_0}(p)}(\varphi\circ r) e^{-f}dv_g + \int_{B_R(p)\setminus B_{r_0}(p)} (\varphi\circ r) e^{-f}dv_g \\
&\leq & \varphi(r_0)  \int_{B_{r_0}(p)}e^{-f}dv_g +\int_{r_0}^R\int_{\mathbb{S}^{n-1}}\varphi(r)e^{-f(r,\theta)}\mathcal{A}(r,\theta)dr d\theta \\
& \leq & \varphi(r_0)\int_{B_{r_0}(p)}e^{-f}dv_g +\int_{r_0}^R\int_{\mathbb{S}^{n-1}}\varphi(r) r^{n-1} e^{-\frac{\rho}{2}r^2+Cr+\lambda+f(p)} dr d\theta \\
& = & \varphi(r_0)\int_{B_{r_0}(p)}e^{-f}dv_g +\omega_{n-1}e^{f(p)+\lambda}\int_{r_0}^R\varphi(r) r^{n-1} e^{-\frac{\rho}{2}r^2+Cr} dr  \\
& = & A\varphi(r_0)+B\int_{r_0}^R\varphi(r) r^{n-1} e^{-\frac{\rho}{2}r^2+Cr} dr, \\
\end{eqnarray*}
\noindent where 
$$A:= \int_{B_{r_0}(p)}e^{-f}dv_g >0 \ \ \ \text{and} \ \ \ B:=\omega_{n-1}e^{\lambda +f(p)}>0.$$
\end{proof}

\medskip

Another consequence of Proposition \ref{t1} is the following volume comparison result proved by Wei and Wylie in \cite{ww}.

\medskip

\begin{cor} Let $(M^n,g,f)$ be a complete smooth metric measure space such that $\textrm{Ric}_f\geq \rho g$, for some $\rho\in \mathbb{R}$. Fix a point $p\in M$. Then, there exist constants $r_0, A, B, C>0$ such that
$$\nu(B_R(p)) \leq A+B\int_{r_0}^Re^{-\frac{\rho r^2}{2}+Cr}dr$$
\noindent for every $R\geq r_0$, where $\nu=e^{-f}dv_g$. In particular, if $\rho>0$ then $\nu$ is a finite measure.
\end{cor}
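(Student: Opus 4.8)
The plan is to read off the statement as the special case $\varphi\equiv 1$ of Proposition \ref{t1}, and then to do a little bookkeeping with the constants. First I would apply Proposition \ref{t1} with the constant function $\varphi\equiv 1$, which is (non-strictly) increasing and hence admissible: the proof of that proposition only uses $\varphi(r)\le\varphi(r_0)$ for $r\le r_0$ to extract the factor $\varphi(r_0)$ from the integral over $B_{r_0}(p)$, and equality holds here. With $\varphi\circ r\equiv 1$ and $\varphi(r_0)=1$ this gives constants $r_0,A,B,C>0$ so that, for every $R\ge r_0$,
$$\nu(B_R(p))=\int_{B_R(p)}e^{-f}\,dv_g\le A+B\int_{r_0}^R r^{n-1}e^{-\frac{\rho r^2}{2}+Cr}\,dr.$$

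Next I would dispose of the polynomial factor $r^{n-1}$, since the target inequality has none. Enlarging $r_0$ if necessary so that $r_0\ge 1$, the elementary bound $\log r\le r-1\le r$ gives $r^{n-1}=e^{(n-1)\log r}\le e^{(n-1)r}$ for all $r\ge r_0$, whence $r^{n-1}e^{Cr}\le e^{(C+n-1)r}$. Relabeling the constant $C+n-1$ as $C$ (it still depends only on $n$, $\rho$, and the geometry of $g$ near $p$) yields
$$\nu(B_R(p))\le A+B\int_{r_0}^R e^{-\frac{\rho r^2}{2}+Cr}\,dr$$
for every $R\ge r_0$, which is exactly the asserted estimate.

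For the final assertion, suppose $\rho>0$. Completing the square, $-\frac{\rho}{2}r^2+Cr=-\frac{\rho}{2}\bigl(r-\frac{C}{\rho}\bigr)^2+\frac{C^2}{2\rho}$, so the integrand is a translated Gaussian and $\int_{r_0}^{\infty}e^{-\frac{\rho r^2}{2}+Cr}\,dr<\infty$. Letting $R\to\infty$ and invoking monotone convergence, $\nu(M)=\lim_{R\to\infty}\nu(B_R(p))\le A+B\int_{r_0}^{\infty}e^{-\frac{\rho r^2}{2}+Cr}\,dr<\infty$, so $\nu$ is finite. I do not expect any genuine obstacle here: all the analytic work — the pointwise volume-element estimate (\ref{e2}) driving Proposition \ref{t1} — is already done, and the only point needing a moment's care is the absorption of $r^{n-1}$ into the exponential, which is harmless precisely because we are free to enlarge $C$.
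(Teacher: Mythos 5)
Your proof is correct and is exactly the derivation the paper intends: the corollary is stated there as an immediate consequence of Proposition \ref{t1}, and taking $\varphi\equiv 1$ and then absorbing the factor $r^{n-1}$ into the exponential via $r^{n-1}\le e^{(n-1)r}$ for $r\ge 1$ (enlarging $r_0$ to $\max(r_0,1)$ and adjusting $A$ accordingly, which is harmless since Proposition \ref{t1} holds with any larger $r_0$) is precisely the bookkeeping needed to reach the stated form with $C$ replaced by $C+n-1$. Your final step — convergence of the translated Gaussian integral for $\rho>0$ plus monotone convergence to conclude $\nu(M)<\infty$ — likewise matches the intended argument.
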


\medskip

\section{An estimate for the Wasserstein distance}\label{sec4}

\medskip

In this section, we prove our main theorem (Theorem \ref{teopr}) and present some of its consequences, including Corollary \ref{copr}.

\medskip

Let $(M^n,g,f)$ be a $1$-Ricci shrinker such that $\nu=(4\pi)^{-\frac{n}{2}}e^{-f}dv_{g}$ is a probability measure and fix a point $p\in M$. The exponential map of $M$ to $p$ induces a probability measure $\overline{\nu}$ on $T_pM$ as follows. It is well-known that there is a domain $\Omega\subset T_pM$ such that $\left.\textrm{exp}_p\right|_{\Omega}:\Omega\rightarrow M\setminus \mathcal{C}(p)$ is a diffeomorphism, where $\mathcal{C}(p)$ denote the cut-locus of $p$. We define the probability measure $\overline{\nu}$ on $T_pM$ by

$$\overline{\nu}=(4\pi)^{-\frac{n}{2}}\chi_{\Omega}e^{-\overline{f}}dv_{\overline{g}}$$
\noindent where $\overline{g}$ and $\overline{f}$ denote the pullback of $g$ and $f$ by the map $\left.\textrm{exp}_p\right|_{\Omega}:\Omega\rightarrow M\setminus \mathcal{C}(p)$, respectively, i.e., $\overline{g}=\left.\textrm{exp}_p\right|_{\Omega}^*g$ and $\overline{f}=\left.\textrm{exp}_p\right|_{\Omega}^*f$.

\begin{remark} If $(M^n,g,f)$ is translated Gaussian shrinker $(\mathbb{R}^n,g_{e},\frac{|x-p|^2}{4})$ then $\overline{\nu}$ is the Gaussian measure on $\mathbb{R}^n$. 
\end{remark}

\begin{lemma}\label{l1} The measure $\overline{\nu}$ has finite second moments with respect to the Euclidean metric $g_0$.
\end{lemma}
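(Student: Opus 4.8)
The plan is to reduce the computation of the second moment of $\overline{\nu}$ with respect to $g_0$ to an integral over $M$ against $\nu=(4\pi)^{-\frac{n}{2}}e^{-f}dv_g$, and then to invoke the finiteness of moments established in Theorem \ref{cz}. By definition, the second moment of $\overline{\nu}$ about the origin $0\in T_pM$ is
$$\int_{T_pM}d_{g_0}^2(0,v)\,d\overline{\nu}(v)=(4\pi)^{-\frac{n}{2}}\int_{\Omega}|v|_{g_0}^2\,e^{-\overline{f}}\,dv_{\overline{g}},$$
since $d_{g_0}(0,v)=|v|_{g_0}$ and $\overline{\nu}$ is supported on $\Omega$.

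First I would record the key geometric observation that identifies the Euclidean norm on $T_pM$ with the geodesic distance to $p$ on $M$. Writing $v=r\theta$ in polar coordinates, with $\theta\in\mathbb{S}^{n-1}$ and $r=|v|_{g_0}$, the radial curve $t\mapsto\exp_p(t\theta)$ is a unit-speed geodesic; since $v\in\Omega$ lies inside the cut locus, this geodesic is minimizing up to parameter $r$. Hence $|v|_{g_0}=r=d_g(p,\exp_p(v))$ for every $v\in\Omega$.

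Next, because $\left.\exp_p\right|_{\Omega}:\Omega\to M\setminus\mathcal{C}(p)$ is a diffeomorphism pulling back $g$ to $\overline{g}$ and $f$ to $\overline{f}$, the change of variables $x=\exp_p(v)$ transforms the integral above into
$$(4\pi)^{-\frac{n}{2}}\int_{M\setminus\mathcal{C}(p)}r^2(x)\,e^{-f}\,dv_g=\int_{M\setminus\mathcal{C}(p)}r^2(x)\,d\nu,$$
where $r(x)=d_g(p,x)$. As the cut locus $\mathcal{C}(p)$ has measure zero, this equals $\int_M d_g^2(p,x)\,d\nu$, which is precisely the second moment of $\nu$ with respect to $g$.

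Finally, the shrinker hypothesis $\Rc_f=\frac{1}{2}g$ yields $\Rc_f\geq\rho g$ with $\rho=\frac{1}{2}>0$, so Theorem \ref{cz} applies to $\nu$ and shows that $\nu$ has finite $k$-moments for every $k>0$; in particular its second moment is finite. Consequently $\int_{T_pM}d_{g_0}^2(0,v)\,d\overline{\nu}(v)<\infty$, as desired. I expect the only genuinely delicate point to be the justification that the Euclidean radial variable $r=|v|_{g_0}$ coincides with $d_g(p,\cdot)$ after pushing forward by $\exp_p$, together with the harmless discarding of the null set $\mathcal{C}(p)$; everything else is a direct change of variables followed by an appeal to Theorem \ref{cz}.
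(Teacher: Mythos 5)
Your proposal is correct and follows essentially the same route as the paper's own proof: identify $|v|_{g_0}$ with $d_g(p,\exp_p(v))$ on $\Omega$, change variables via the diffeomorphism $\left.\exp_p\right|_{\Omega}$ to rewrite the second moment of $\overline{\nu}$ as $\int_M r^2\,d\nu$ after discarding the $\nu$-null cut locus, and then invoke Theorem \ref{cz} with $\rho=\tfrac{1}{2}$. Your only addition is an explicit justification of the identity $r(\exp_p(v))=|v|_{g_0}$ via minimizing radial geodesics, which the paper simply cites as well-known.
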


\begin{proof} Consider $r(x)=d_g(p,x)$ the geodesic distance function to the point $p$. It is well-known that the function $r^2$ is smooth on $M\setminus \mathcal{C}(p)$ and  $r(\textrm{exp}_p(v))=|v|_{g_0}$ for every $v\in\Omega$. Then, by change of variables formula we obtain
\begin{eqnarray*}
\int_{T_pM} |v|_{g_0}^2 d\overline{\nu} &=& \int_{\Omega} |v|_{g_0}^2 (4\pi)^{-\frac{n}{2}} e^{-\overline{f}}dv_{\overline{g}}\\
&=& \int_{\Omega} r^2(\textrm{exp}_p(v)) (4\pi)^{-\frac{n}{2}} e^{-\overline{f}}dv_{\overline{g}}\\
&=& \int_{M\setminus \mathcal{C}(p)} r^2 (4\pi)^{-\frac{n}{2}}e^{-f} dv_g\\
&=& \int_{M\setminus \mathcal{C}(p)} r^2 d\nu.
\end{eqnarray*}

So, since $\mathcal{C}(p)$ is a $\nu$-null set we have
\begin{equation}\label{sm}\int_{T_pM} |v|_{g_0}^2 d\overline{\nu} =\int_{M} r^2 d\nu.\end{equation}

It follows from Theorem \ref{cz} that $\nu$ has finite second moments with respect to $g$. Hence, by equality (\ref{sm}) we have that $\overline{\nu}$ has second finite moments with respect to $g_0$.
\end{proof}

\medskip

We have two probability measures on $T_pM$ with second finite moments with respect to Euclidean metric $g_0$: the measure $\overline{\nu}$ and the Gaussian measure $\gamma$ defined in Example \ref{ex2}. Our main result provides an upper estimate for the Wasserstein distance between $\overline{\nu}$ and $\gamma$ and the rigidity arising from this estimate. Before stating the results of this section, we introduce the following notations:

\medskip
 
\begin{itemize}
\item For $s\geq 0$, we denote 
$$\Sigma_s=\{v\in T_pM; |v|_{g_0}\geq s\}.$$

\medskip

\item If $k$ is a nonnegative integer number and  $a,s\in\mathbb{R}$, we have
$$\int_s^{+\infty}r^ke^{-\frac{r^2}{4}+ar}dr< +\infty.$$

\noindent In this case, we denote
$$\Gamma(s,k,a)= \int_s^{+\infty}r^ke^{-\frac{r^2}{4}+ar}dr.$$

\medskip

\item For $k\in\mathbb{N}$ and $a,b,s\in\mathbb{R}$, we denote

$$\alpha(k,s,a,b)=(4\pi)^{-\frac{k}{2}}\omega_{k-1}e^b\left(a\Gamma(s,k,a)+b\Gamma(s,k-1,a)\right).$$

\medskip

\noindent Furthermore, we denote $\alpha(k,0,a,b)$ simply as $\alpha(k,a,b)$.
\end{itemize}

\medskip

Now let us prove our main result, that is

\medskip

\begin{theorem}[Theorem \ref{teopr}]\label{teop} The Wasserstein distance between the measure $\overline{\nu}$ on $T_pM$ and the Gaussian measure $\gamma$ on $T_pM$ satisfies
$$\frac{1}{4}W_{g_0}^2(\overline{\nu},\gamma) \leq \alpha(n,a,b) e^{f(p)-\mu_g}+f(p)-\mu_g$$

\noindent where $a$ and $b$ are nonnegative constants such that
$$f(x)\geq \frac{r^2(x)}{4}-ar(x)-b$$
\noindent for every $x\in M$. Moreover, if the equality holds then $a=b=0$ and $(M^n,g,f)$ is $(\mathbb{R}^n,g_{e},\frac{|x-p|^2}{4})$, i.e., the Gaussian shrinker up to a translation.
\end{theorem}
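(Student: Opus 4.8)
The plan is to combine the Talagrand-type inequality of Theorem \ref{ttov} with a sharp Gaussian upper bound for the density $\frac{d\overline{\nu}}{d\gamma}$. First I would apply Theorem \ref{ttov} to the Gaussian shrinker $(T_pM, g_0, \frac{|v|_{g_0}^2}{4})$, for which $\textrm{Ric}_f = \frac{1}{2}g_0$ and the reference measure is exactly $\gamma$; since $\overline{\nu}$ is absolutely continuous with respect to $\gamma$ and has finite second moment by Lemma \ref{l1}, this gives $\frac{1}{4}W_{g_0}^2(\overline{\nu},\gamma) \le \int_{T_pM}\log\left(\frac{d\overline{\nu}}{d\gamma}\right)d\overline{\nu}$, i.e. inequality (\ref{eqp}). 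Writing $r=|v|_{g_0}$ and using polar normal coordinates, the Radon--Nikodym derivative is $u:=\frac{d\overline{\nu}}{d\gamma} = \frac{\mathcal{A}(r,\theta)}{r^{n-1}}\,e^{\frac{r^2}{4}-\overline{f}}$ on $\Omega$ and $0$ outside, so the whole problem reduces to estimating the relative entropy $\int\log u\,d\overline{\nu}$ from above.

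Next I would bound $u$ pointwise. The Bochner--Riccati computation behind Proposition \ref{t1} (the volume-growth technique of \cite{exd}), applied with $\textrm{Ric}_f=\frac{1}{2}g$ so that $\textrm{Ric}_g(\partial_r,\partial_r)=\frac{1}{2}-\overline{f}''$, yields $\log\frac{\mathcal{A}(r,\theta)}{r^{n-1}} \le -\frac{r^2}{12}+\overline{f}-\frac{2}{r}\int_0^r\overline{f}(s,\theta)\,ds+f(p)$, and hence, after the $\overline{f}$-terms cancel, $\log u \le \frac{2}{r}\int_0^r\big(\tfrac{s^2}{4}-\overline{f}(s,\theta)\big)\,ds+f(p)$. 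The essential point is to sharpen this into the Gaussian bound $u(v)\le e^{\,f(p)-\mu_g+a|v|_{g_0}+b}$. Here the entropy enters through the identities available because $V_f(M)=1$: Hamilton's identity $R_g+|\nabla f|^2=f-\mu_g$ gives $\int_M f\,d\nu=\frac{n}{2}+\mu_g$ and, along minimal geodesics, $(\overline{f}')^2\le \overline{f}-\mu_g$, which integrates to the quadratic control $\overline{f}(s,\theta)\le \frac{s^2}{4}+\sqrt{f(p)-\mu_g}\,s+(f(p)-\mu_g)$. Feeding this, together with the hypothesis $f\ge\frac{r^2}{4}-ar-b$ of (\ref{eqp1}), into the radial integral above is what produces the exponent $f(p)-\mu_g+a|v|+b$.

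With the density bound in hand the remainder is a direct computation. Since $\log u\le f(p)-\mu_g+a|v|_{g_0}+b$ on $\Omega$ and $\overline{\nu}(T_pM)=1$, integrating gives $\int\log u\,d\overline{\nu}\le (f(p)-\mu_g)+b+a\int_{T_pM}|v|_{g_0}\,d\overline{\nu}$. To control the last two terms I would use $d\overline{\nu}=u\,d\gamma\le e^{f(p)-\mu_g+b}e^{a|v|_{g_0}}\,d\gamma$ and the elementary Gaussian integrals $\int_{T_pM}e^{a|v|}d\gamma=(4\pi)^{-\frac{n}{2}}\omega_{n-1}\Gamma(0,n-1,a)$ and $\int_{T_pM}|v|\,e^{a|v|}d\gamma=(4\pi)^{-\frac{n}{2}}\omega_{n-1}\Gamma(0,n,a)$: from $\int d\overline{\nu}=1$ one gets $b\le b\,e^{f(p)-\mu_g+b}(4\pi)^{-\frac{n}{2}}\omega_{n-1}\Gamma(0,n-1,a)$, and likewise $a\int|v|\,d\overline{\nu}\le a\,e^{f(p)-\mu_g+b}(4\pi)^{-\frac{n}{2}}\omega_{n-1}\Gamma(0,n,a)$. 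Adding these gives precisely $b+a\int|v|\,d\overline{\nu}\le \alpha(n,a,b)\,e^{f(p)-\mu_g}$, and combining with the previous display and (\ref{eqp}) yields (\ref{eqp12}).

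For the rigidity statement I would trace back the equalities. Equality in (\ref{eqp12}) forces equality at every step: in particular $u\equiv e^{f(p)-\mu_g+a|v|+b}$ almost everywhere with $\Omega=T_pM$ (so $p$ has empty cut locus and $\exp_p$ is a global diffeomorphism), equality in the Cauchy--Schwarz step $|\nabla^2 r|^2\ge\frac{(\Delta r)^2}{n-1}$ of the Riccati comparison (forcing the geodesic spheres to be totally umbilical), and equality in $f\ge\frac{r^2}{4}-ar-b$. These together force $a=b=0$, the metric to be rotationally symmetric and flat ($\textrm{Ric}_g\equiv 0$), and $f=\frac{r^2}{4}$; since a complete flat shrinker with $\nabla^2 f=\frac{1}{2}g$ is the Gaussian, one concludes $(M^n,g,f)=(\mathbb{R}^n,g_e,\frac{|x-p|^2}{4})$. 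The main obstacle is the sharpening in the second paragraph: the bare Riccati estimate only yields the exponent $a|v|+2b+f(p)$, and recovering the \emph{correct} constant — the single $b$ and the $-\mu_g$ — genuinely requires exploiting the global soliton identity $R_g+|\nabla f|^2=f-\mu_g$ rather than the mere pointwise curvature bound, while simultaneously controlling the cut-locus boundary terms; this is the delicate heart of the argument.
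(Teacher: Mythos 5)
Your overall skeleton matches the paper's: apply the Talagrand inequality (Theorem \ref{ttov}) to the Gaussian shrinker on $T_pM$, then bound the relative entropy $\int\log u\,d\overline{\nu}$ where $u=\frac{d\overline{\nu}}{d\gamma}$; and your third paragraph is fine — given the pointwise bound $u\le e^{f(p)-\mu_g+a|v|_{g_0}+b}$, your integration against $e^{a|v|}d\gamma$ correctly reproduces the constant $\alpha(n,a,b)e^{f(p)-\mu_g}$ (a mild variant of the paper's direct computation of $\int_{\Sigma_0'}(\frac{r^2}{4}-f)\,d\nu$). The genuine gap is your second paragraph, i.e.\ exactly the step you yourself flag as ``the delicate heart'': your proposed derivation of the pointwise density bound cannot work. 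From the integrated Riccati inequality of Proposition \ref{t1} with $\rho=\frac12$ you correctly get $\log u\le \frac{2}{r}\int_0^r\bigl(\frac{s^2}{4}-\overline{f}(s,\theta)\bigr)ds+f(p)$, and the hypothesis $f\ge\frac{r^2}{4}-ar-b$ only yields the exponent $ar+2b+f(p)$, as you note. But your fix — feeding in the upper bound $\overline{f}(s)\le\frac{s^2}{4}+\sqrt{f(p)-\mu_g}\,s+(f(p)-\mu_g)$ — goes in the wrong direction: an upper bound on $\overline{f}$ gives a \emph{lower} bound on the integrand $\frac{s^2}{4}-\overline{f}(s)$, so it cannot sharpen an upper bound on $\log u$. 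No combination of these two one-sided controls produces the exponent $f(p)-\mu_g+ar+b$ (along this route you would need $b\le-\mu_g$, which is not available).

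The paper's actual mechanism is different: it never integrates $f$ along the ray. It keeps the differential form
$$\frac{d}{dr}\left(r\log\frac{\mathcal{A}(r,\theta)}{r^{n-1}}\right)\le -\frac{r^2}{4}+r\langle\nabla f,\nabla r\rangle-f+f(p),$$
substitutes the pointwise soliton identity $f=R_g+|\nabla f|^2+\mu_g$ (valid because $V_f(M)=1$, Remark \ref{rentropy}), and completes the square:
$$-\frac{r^2}{4}+r\langle\nabla f,\nabla r\rangle-R_g-|\nabla f|^2=-\Bigl(\frac{r}{2}-\langle\nabla f,\nabla r\rangle\Bigr)^2-\bigl(|\nabla f|^2-\langle\nabla f,\nabla r\rangle^2\bigr)-R_g\le -R_g\le 0.$$
Integrating then gives the clean volume-element bound $\frac{dv_{\overline{g}}}{dv_{g_0}}=\frac{\mathcal{A}}{r^{n-1}}\le e^{f(p)-\mu_g}$ with no $a,b$ at all, after which $\log u\le f(p)-\mu_g+\bigl(\frac{r^2}{4}-\overline{f}\bigr)\le f(p)-\mu_g+ar+b$ is immediate; so the identity $R_g+|\nabla f|^2=f-\mu_g$ must be used \emph{inside} the derivative inequality before integrating, not as an integrated bound on $\overline{f}$. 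Your rigidity sketch has a related omission: the paper deduces $R_g\equiv 0$ from equality forcing $\mathcal{A}=r^{n-1}e^{f(p)-\mu_g}$, gets flatness from \cite{prs}, writes $f$ explicitly on $\mathbb{R}^n$, and obtains $b=-\mu_g$ together with $\mu_g=0$ from the normalization $V_f(M)=1$ — your assertion that equality ``forces $a=b=0$'' skips precisely this normalization step, which only becomes accessible once the corrected density bound is in place.
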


\medskip

Theorem \ref{teop} is a consequence of the following general result.

\medskip

\begin{theorem}\label{teog} The Wasserstein distance between the restriction of measures $\overline{\nu}$ and $\gamma$ to set $\Sigma_s$ ($s\geq 0$) satisfies
$$\frac{1}{4}W_{g_0}^2(\left.\overline{\nu}\right|_{\Sigma_s},\left.\gamma\right|_{\Sigma_s}) \leq  \alpha(n,s,a,b)\frac{e^{f(p)-\mu_g}}{\overline{\nu}(\Sigma_s)}+\log\left(\frac{\gamma(\Sigma_s)}{\overline{\nu}(\Sigma_s)}\right)+f(p)-\mu_g$$
\noindent where $a$ and $b$ are nonnegative constants such that $$f(x)\geq \frac{r^2(x)}{4}-ar(x)-b$$
\noindent for every $x\in M$ with $r(x)\geq s$. Moreover, if the equality holds then $a=b=0$ and $(M^n,g,f)$ is $(\mathbb{R}^n,g_{e},\frac{|x-p|^2}{4})$, i.e., the Gaussian shrinker up to a translation.
\end{theorem}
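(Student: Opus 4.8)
The plan is to reduce the Wasserstein estimate to a sharp pointwise bound on the density $\frac{d\overline{\nu}}{d\gamma}$ and then feed it through the Talagrand inequality (Theorem \ref{ttov}) with $\rho=\frac12$. Working in polar normal coordinates at $p$, where $dv_{\overline{g}}=\mathcal{A}(r,\theta)\,dr\,d\theta$ and $dv_{g_0}=r^{n-1}\,dr\,d\theta$, the two $(4\pi)^{-\frac n2}$ factors cancel and one gets, on $\Omega$,
$$u(v):=\frac{d\overline{\nu}}{d\gamma}(v)=\frac{\mathcal{A}(r,\theta)}{r^{n-1}}\,e^{\frac{r^2}{4}-f(r,\theta)},\qquad r=|v|_{g_0},$$
so that $\log u=\log\frac{\mathcal{A}}{r^{n-1}}+\big(\tfrac{r^2}{4}-f(r,\theta)\big)$ splits into a Jacobian part and a potential part. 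The potential part is controlled directly by the hypothesis: $\frac{r^2}{4}-f(r,\theta)\le ar+b$ on $\Sigma_s$.

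The geometric heart is the sharp Jacobian bound $\log\frac{\mathcal{A}(r,\theta)}{r^{n-1}}\le f(p)-\mu_g$ for all $(r,\theta)$, which is where the \emph{global} entropy enters a \emph{local} estimate. I would start from (\ref{E}) specialized to $\rho=\frac12$, namely $\log\frac{\mathcal{A}}{r^{n-1}}\le-\frac{1}{12}r^2+f(r,\theta)-\frac{2}{r}\int_0^r f(s,\theta)\,ds+f(p)$, and show its right-hand side is $\le f(p)-\mu_g$. Writing $f(s)=f(\gamma(s))$ along the minimizing geodesic and setting $G(r)=r f(r)-2\int_0^r f\,ds-\frac{r^3}{12}+\mu_g r$, one has $G(0)=0$ and, using the normalized soliton identity $R_g+|\nabla f|^2=f-\mu_g$ from Remark \ref{rentropy} together with $R_g\ge0$ and $(f')^2\le|\nabla f|^2$,
$$G'(r)=r f'(r)-f(r)+\mu_g-\tfrac{r^2}{4}\le r f'-(f')^2-\tfrac{r^2}{4}=-\big(f'-\tfrac r2\big)^2\le 0.$$
Hence $G\le 0$, which is exactly the claimed bound. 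Combining it with $\frac{r^2}{4}-f\le ar+b$ gives the clean pointwise estimate $\log u\le (f(p)-\mu_g)+a|v|_{g_0}+b$ on $\Sigma_s$, and in particular $u\le e^{(f(p)-\mu_g)+a|v|_{g_0}+b}$.

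Next I would compute the relative entropy. Since $\frac{d(\overline{\nu}|_{\Sigma_s})}{d(\gamma|_{\Sigma_s})}=\frac{\gamma(\Sigma_s)}{\overline{\nu}(\Sigma_s)}\,u$,
$$H(\overline{\nu}|_{\Sigma_s}\mid\gamma|_{\Sigma_s})=\log\frac{\gamma(\Sigma_s)}{\overline{\nu}(\Sigma_s)}+\frac{1}{\overline{\nu}(\Sigma_s)}\int_{\Sigma_s}\log u\,d\overline{\nu},$$
and the first term is already the one in the theorem. For the integral I split $\log u=(f(p)-\mu_g)+[\log u-(f(p)-\mu_g)]$; the constant contributes $(f(p)-\mu_g)\overline{\nu}(\Sigma_s)$, while for the remainder I use $[\log u-(f(p)-\mu_g)]_+\le a|v|_{g_0}+b$ together with $u\le e^{(f(p)-\mu_g)+a|v|_{g_0}+b}$ and $d\overline{\nu}=u\,d\gamma$ to pass to a Gaussian integral. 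This turns $\int_{\Sigma_s}[\,\cdot\,]\,d\overline{\nu}$ into $e^{f(p)-\mu_g}e^{b}(4\pi)^{-\frac n2}\omega_{n-1}\int_s^{\infty}(ar+b)r^{n-1}e^{-\frac{r^2}{4}+ar}\,dr=\alpha(n,s,a,b)\,e^{f(p)-\mu_g}$. Applying Theorem \ref{ttov} to the pair $\overline{\nu}|_{\Sigma_s},\gamma|_{\Sigma_s}$ yields $\frac14 W_{g_0}^2\le H$, and assembling the three contributions gives the stated inequality. For rigidity I trace the equalities backward: equality forces $G\equiv0$, hence $f'(s)=\frac s2$, $\nabla f$ radial and $R_g\equiv0$ along every ray, and equality in (\ref{E}) forces $\mathcal{A}\equiv r^{n-1}$; with $R_g\equiv0\Rightarrow(M,g)\cong(\mathbb{R}^n,g_e)$ and $a=b=0$ this identifies the shrinker with the translated Gaussian.

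I expect two delicate points. The genuinely new input is the sharp Jacobian bound $\log\frac{\mathcal{A}}{r^{n-1}}\le f(p)-\mu_g$, i.e.\ upgrading the soft comparison (\ref{E}) to one governed by the entropy; the monotonicity of $G$ via the soliton identity is the crux and is what makes the constant sharp and drives the rigidity. The second, more technical, obstacle is the use of Talagrand's inequality for the \emph{conditioned} measures on the non-convex set $\Sigma_s$: Theorem \ref{ttov} applies verbatim for $s=0$ (the full Gaussian shrinker with $\rho=\frac12$, giving Theorem \ref{teop}), and the passage to the localized Theorem \ref{teog} for $s>0$ requires justifying that $\gamma|_{\Sigma_s}$ still satisfies $T(\tfrac12)$, which is where I would expect the argument to need the most care.
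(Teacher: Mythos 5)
Your proposal is correct and follows essentially the same route as the paper's proof: the identical density factorization, the same sharp Jacobian bound $\log\left(\mathcal{A}(r,\theta)/r^{n-1}\right)\leq f(p)-\mu_g$ derived from the normalized soliton identity $R_g+|\nabla f|^2=f-\mu_g$ by completing the square (your monotonicity of $G$ is precisely the paper's inequality (\ref{x5}) in different bookkeeping), the same entropy splitting yielding $\alpha(n,s,a,b)e^{f(p)-\mu_g}$, and the same rigidity trace-back through $R_g\equiv 0$, flatness via \cite{prs}, and the forced form of $f$. Your one flagged concern --- invoking the Talagrand inequality for the restricted pair $\left(\left.\overline{\nu}\right|_{\Sigma_s},\left.\gamma\right|_{\Sigma_s}\right)$ on the non-convex set $\Sigma_s$ --- applies equally to the paper itself, which cites Theorem \ref{ovbecz} for this pair without further justification; for the main Theorem \ref{teopr}, where $s=0$, the issue is vacuous.
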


\begin{proof} Note that $\left.\overline{\nu}\right|_{\Sigma_s}$ is absolutely continuous with respect to $\left.\gamma\right|_{\Sigma_s}$ e
$$\frac{d\left.\overline{\nu}\right|_{\Sigma_s}}{d\left.\gamma\right|_{\Sigma_s}}=\left(\frac{\gamma(\Sigma_s)}{\overline{\nu}(\Sigma_s)}\right)\left(\frac{dv_{\overline{g}}}{dv_{g_0}}\right)\chi_{\Omega}e^{f_0-\overline{f}}.$$

It follows from Lemma \ref{l1} and Theorem \ref{ovbecz} that

\begin{eqnarray}
\frac{1}{4}W^2(\left.\overline{\nu}\right|_{\Sigma_s},\left.\gamma\right|_{\Sigma_s}) &\leq & \int_{T_pM} \log\left(\frac{d\left.\overline{\nu}\right|_{\Sigma_s}}{d\left.\gamma\right|_{\Sigma_s}}\right)d\left.\overline{\nu}\right|_{\Sigma_s} \nonumber \\
&=& \frac{1}{\overline{\nu}(\Sigma_s)}\int_{\Sigma_s} \left(f_0-\overline{f}+\log\left(\frac{dv_{\overline{g}}}{dv_{g_0}}\right)+\log\left(\frac{\gamma(\Sigma_s)}{\overline{\nu}(\Sigma_s)}\right)\right)d\overline{\nu} \nonumber \\
\label{fr0}&=& \frac{1}{\overline{\nu}(\Sigma_s)}\int_{\Sigma_s} \left(f_0-\overline{f}\right)d\overline{\nu} +\frac{1}{\overline{\nu}(\Sigma_s)}\int_{\Sigma_s}\log\left(\frac{dv_{\overline{g}}}{dv_{g_0}}\right) d\overline{\nu}\\
& & +\log\left(\frac{\gamma(\Sigma_s)}{\overline{\nu}(\Sigma_s)}\right)\nonumber
\end{eqnarray}

In terms of polar normal coordinates around $p\in M$, we may write
$$dv_g=\mathcal{A}(r,\theta)dr d\theta$$
\noindent where $d\theta$  is the area element of the unit $(n-1)$-dimensional sphere $\mathbb{S}^{n-1}$. We show in the proof of Proposition \ref{t1} that
\begin{equation}\label{x4}
\frac{d}{dr}\left(r\log\left(\frac{\mathcal{A}(r,\theta)}{r^{n-1}}\right)\right)\leq  -\int_{0}^rs\textrm{Ric}_g\left(\frac{\partial}{\partial s},\frac{\partial}{\partial s}\right)ds.\end{equation}

Let $x=(r,\theta)\in M$ and $\beta$ be a minimizing unit geodesic on $(M,g)$ such that $\beta(0)=p$ and $\beta(r)=x$. Denote by $f(s)=f(\beta(s))$, $s\in[0,r]$. Since $\textrm{Ric}_f=\frac{1}{2}g$ we have
$$\textrm{Ric}_g\left(\frac{\partial}{\partial s},\frac{\partial}{\partial s}\right)+f''(s)=\frac{1}{2}.$$

Consequently, by (\ref{x4}) we obtain

\begin{eqnarray}
\frac{d}{dr}\left(r\log\left(\frac{\mathcal{A}(r,\theta)}{r^{n-1}}\right)\right) &\leq & -\frac{r^2}{4}+\int_0^rsf''(s)ds \nonumber\\
&=& -\frac{r^2}{4}+rf'(r)-f(r)+f(0)\nonumber\\
&=& -\frac{r^2}{4}+r\langle \nabla f,\nabla r\rangle(x)-f(x)+f(p)\nonumber\\
\label{x5}&=& -\frac{r^2}{4}+r\langle \nabla f,\nabla r\rangle (x)-R_g(x)-|\nabla f|^2(x)-\mu_g+f(p)\\
&=& -\left(\frac{r}{2}-\langle \nabla f,\nabla r\rangle\right)^2(x)-R_g(x)-\mu_g+f(p)\nonumber\\
& & -(|\nabla f|^2-\langle \nabla f,\nabla r\rangle^2)(x) \nonumber\\
&\leq & -R_g(x)-\mu_g+f(p)\nonumber
\end{eqnarray}
\noindent where we have used in (\ref{x5}) the equality $R_g+|\nabla f|^2=f-\mu_g$. It follows that
$$\frac{d}{dr}\left(r\log\left(\frac{\mathcal{A}(r,\theta)}{r^{n-1}}\right)\right) \leq -R_g(r,\theta)-\mu_g+f(p).$$

So, for $0<\epsilon<r$ we have

$$r\log\left(\frac{\mathcal{A}(r,\theta)}{r^{n-1}}\right) -\epsilon\log\left(\frac{\mathcal{A}(\epsilon,\theta)}{\epsilon^{n-1}}\right) \leq  -\int_{\epsilon}^r R_g(s,\theta)ds+(f(p)-\mu_g)(r-\epsilon).$$

Let $\epsilon \to 0$. Then, we obtain
$$\log\left(\frac{\mathcal{A}(r,\theta)}{r^{n-1}}\right)\leq  -\frac{1}{r}\int_{0}^r R_g(s,\theta)ds+f(p)-\mu_g.$$

This implies that
$$\mathcal{A}(r,\theta)\leq r^{n-1}e^{-\frac{1}{r}\int_{0}^r R_g(s,\theta)ds+f(p)-\mu_g}.$$

So, since $R_g\geq 0$ we have 
\begin{equation}\label{fr}
\mathcal{A}(r,\theta)\leq r^{n-1} e^{f(p)-\mu_g}.
\end{equation}

In terms of polar normal coordinates around the origin $o\in T_pM$, we have
$$dv_{\overline{g}}=\mathcal{A}(r,\theta)dr d\theta \ \ \ \text{and} \ \ \ dv_{g_0}=r^{n-1} dr d\theta .$$

This implies that $$\left(\frac{dv_{\overline{g}}}{dv_{g_0}}\right)(r,\theta)=\frac{\mathcal{A}(r,\theta)}{r^{n-1}}.$$

By inequality (\ref{fr}) we obtain $$\frac{dv_{\overline{g}}}{dv_{g_0}} \leq e^{f(p)-\mu_g}.$$

This implies that
\begin{equation}\label{fr2} \frac{1}{\overline{\nu}(\Sigma_s)}\int_{\Sigma_s}\log\left(\frac{dv_{\overline{g}}}{dv_{g_0}}\right) d\overline{\nu}\leq f(p)-\mu_g.\end{equation}

Since $\mathcal{C}(p)$ is a $\nu$-null set, the function $r^2$ is smooth on $M\setminus \mathcal{C}(p)$ and $|v|_{g_0}=r(\textrm{exp}_p(v))$ for every $v\in \Omega$, we have
$$\int_{\Sigma_s} \left(f_0-\overline{f}\right)d\overline{\nu} = \int_{\Sigma'_s}\left(\frac{r^2}{4}-f\right)d\nu$$
\noindent where $\Sigma'_s:=\{x\in M; r(x)\geq s\}.$ It follows that
\begin{eqnarray}
\int_{\Sigma_s} \left(f_0-\overline{f}\right)d\overline{\nu} &=& \int_{\Sigma'_s}\left(\frac{r^2}{4}-f\right)(4\pi)^{-\frac{n}{2}}e^{-f}dv_g \nonumber\\
&=& (4\pi)^{-\frac{n}{2}}\int_s^{+\infty}\int_{\mathbb{S}^{n-1}}\left(\frac{r^2}{4}-f(r,\theta)\right)e^{-f(r,\theta)}\mathcal{A}(r,\theta)d\theta dr \nonumber \\
\label{fr3}&\leq & (4\pi)^{-\frac{n}{2}}e^{f(p)-\mu_g}\int_s^{+\infty}\int_{\mathbb{S}^{n-1}}\left(\frac{r^2}{4}-f(r,\theta)\right)r^{n-1}e^{-f(r,\theta)}d\theta dr\\
\label{fr4}&\leq & (4\pi)^{-\frac{n}{2}}e^{b+f(p)-\mu_g}\int_s^{+\infty}\int_{\mathbb{S}^{n-1}}\left(ar+b\right)r^{n-1}e^{-\frac{r^2}{4}+ar}d\theta dr\\
& = & (4\pi)^{-\frac{n}{2}}\omega_{n-1}e^{b+f(p)-\mu_g}\int_s^{+\infty}\left(ar+b\right)r^{n-1}e^{-\frac{r^2}{4}+ar} dr \nonumber\\
& = & (4\pi)^{-\frac{n}{2}}\omega_{n-1} e^{b+f(p)-\mu_g}\left(a\Gamma(s,n,a)+b\Gamma(s,n-1,a)\right)\nonumber
\end{eqnarray}
\noindent where we have used in (\ref{fr4}) we have used that $f(x)\geq\frac{r^2(x)}{4}-ar(x)-b$ holds for every $x\in M$ with $r(x)\geq s$ and in (\ref{fr3}) we have used the inequality (\ref{fr}). This implies that
\begin{equation}\label{fr5}
\int_{\Sigma_s} \left(f_0-\overline{f}\right)d\overline{\nu}\leq \alpha(n,s,a,b) e^{f(p)-\mu_g}
\end{equation}

Hence, by (\ref{fr0}), (\ref{fr2}) and (\ref{fr5}) we obtain
\begin{equation}\label{xz}\frac{1}{4}W_{g_0}^2(\left.\overline{\nu}\right|_{\Sigma_s},\left.\gamma\right|_{\Sigma_s}) \leq  \alpha(n,s,a,b)\frac{e^{f(p)-\mu_g}}{\overline{\nu}(\Sigma_s)} +\log\left(\frac{\gamma(\Sigma_s)}{\overline{\nu}(\Sigma_s)}\right)+f(p)-\mu_g.\end{equation}

Assume that the equality holds in (\ref{xz}). This implies that the equality in (\ref{fr3}) and (\ref{fr4}) holds. Consequently,

\medskip

\begin{enumerate}

\item The equality in (\ref{fr}) holds for every $r\geq s$, i.e., $\mathcal{A}(r,\theta)=r^{n-1}e^{f(p)-\mu_g},$ for every $r\geq s$;

\medskip

\item $f(x)=\frac{r^2(x)}{4}-ar(x)-b$, for every $x\in\mathbb{R}^n$ with $r(x)\geq s$.
\end{enumerate}

\medskip

By (1) we have $$\int_0^rR_g(t,\theta)dt=0,$$\noindent for every $r\geq s$. Since $R_g\geq 0$ we obtain $R_g\equiv 0$. It follows that $(M,g)$ is isometric to $(\mathbb{R}^n,g_{e})$. By passing an isometry, we have that there is $y\in\mathbb{R}^n$ such that 
$$f(x)=\frac{|x-p|^2}{4}+\langle x-p, y\rangle+ |y|^2+\mu_g$$
\noindent for every $x\in \mathbb{R}^n$. By (2) we obtain that $a=0$, $y=o$ and $b=-\mu_g$. This implies that $f(x)=\frac{|x-p|^2}{4}+\mu_g$, for every $x\in\mathbb{R}^n$. Since $\nu$ is a probability measure we have $\mu_g=0$. Therefore, $(M^n,g,f)$ is $(\mathbb{R}^n,g_{e},\frac{|x-p|^2}{4})$ and $a=b=0$.
\end{proof}

By Remark \ref{rentropy}, we have that if $f$ attains a global minimum at $p$ then $$f(p)-\mu_g=R_g(p).$$

So, we conclude this paper with the following corollaries of Theorem \ref{teog}.

\begin{cor} If $f$ attains a global minimum at $p$, then the Wasserstein distance between the restriction of measures $\overline{\nu}$ and $\gamma$ to set $\Sigma_s$ ($s\geq 0$) satisfies
$$\frac{1}{4}W_{g_0}^2(\left.\overline{\nu}\right|_{\Sigma_s},\left.\gamma\right|_{\Sigma_s}) \leq  \alpha(n,s,a,b)\frac{e^{R_g(p)}}{\overline{\nu}(\Sigma_s)} +\log\left(\frac{\gamma(\Sigma_s)}{\overline{\nu}(\Sigma_s)}\right)+R_g(p)$$
\noindent where $a$ and $b$ are nonnegative constants such that $$f(x)\geq \frac{r^2(x)}{4}-ar(x)-b$$
\noindent for every $x\in M$ with $r(x)\geq s$. Moreover, if the equality holds then $a=b=0$ and $(M^n,g,f)$ is $(\mathbb{R}^n,g_{e},\frac{|x-p|^2}{4})$, i.e., the Gaussian shrinker up to a translation.
\end{cor}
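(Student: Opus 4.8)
The plan is to deduce this statement directly from Theorem \ref{teog}, which already carries all of the analytic content. The only new ingredient is the identification, under the extra hypothesis that $p$ is a global minimum of $f$, of the quantity $f(p)-\mu_g$ appearing throughout Theorem \ref{teog} with the scalar curvature $R_g(p)$.

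First I would invoke the identity recorded in Remark \ref{rentropy}. Since $(M^n,g,f)$ is a $1$-Ricci shrinker satisfying the normalization $\int_M(4\pi)^{-\frac{n}{2}}e^{-f}dv_g=1$, i.e. $V_f(M)=1$, that remark gives
$$R_g+|\nabla f|^2=f-\mu_g$$
pointwise on $M$. Evaluating at a point $p$ where $f$ attains its global minimum (such a point exists by the Cao--Zhou estimate quoted earlier), the first-order condition forces $\nabla f(p)=0$, hence $|\nabla f|^2(p)=0$, and therefore
$$R_g(p)=f(p)-\mu_g.$$

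Next I would substitute this identity into the conclusion of Theorem \ref{teog}. Replacing each occurrence of $f(p)-\mu_g$ by $R_g(p)$ in the inequality
$$\frac{1}{4}W_{g_0}^2(\left.\overline{\nu}\right|_{\Sigma_s},\left.\gamma\right|_{\Sigma_s}) \leq  \alpha(n,s,a,b)\frac{e^{f(p)-\mu_g}}{\overline{\nu}(\Sigma_s)}+\log\left(\frac{\gamma(\Sigma_s)}{\overline{\nu}(\Sigma_s)}\right)+f(p)-\mu_g$$
produces precisely the asserted bound, with the same admissible nonnegative constants $a,b$ satisfying $f(x)\geq \frac{r^2(x)}{4}-ar(x)-b$ for all $x$ with $r(x)\geq s$.

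Finally, the rigidity clause needs no separate argument. Because $R_g(p)=f(p)-\mu_g$ is an \emph{equality}, the substitution preserves the equality case, so attaining equality in the corollary is equivalent to attaining it in Theorem \ref{teog}; the latter already shows that equality forces $a=b=0$ and $(M^n,g,f)=(\mathbb{R}^n,g_e,\frac{|x-p|^2}{4})$. Since no genuinely new estimate must be established, I do not anticipate any real obstacle; the only point to state carefully is that the hypothesis that $f$ attains a global minimum at $p$ is exactly what legitimizes discarding the $|\nabla f|^2$ term and thereby converting $f(p)-\mu_g$ into the geometrically meaningful scalar curvature $R_g(p)$.
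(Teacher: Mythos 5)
Your proposal is correct and follows exactly the paper's route: the paper likewise derives $R_g(p)=f(p)-\mu_g$ from Remark \ref{rentropy} (using $V_f(M)=1$ and $\nabla f(p)=0$ at the global minimum) and then substitutes this identity into Theorem \ref{teog}, with the rigidity clause carried over unchanged since the substitution is an equality. Nothing is missing.
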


\begin{cor}[Corollary \ref{copr}] If $f$ attains a global minimum at $p$ then the Wasserstein distance between the measure $\overline{\nu}$ on $T_pM$ and the Gaussian measure $\gamma$ on $T_pM$ satisfies
$$\frac{1}{4}W_{g_0}^2(\overline{\nu},\gamma) \leq \alpha(n,a,b)e^{R_g(p)}+R_g(p)$$
\noindent where $a$ and $b$ are nonnegative constants such that
$$f(x)\geq \frac{r^2(x)}{4}-ar(x)-b$$
\noindent for every $x\in M$. Moreover, if the equality holds then $a=b=0$ and $(M^n,g,f)$ is $(\mathbb{R}^n,g_{e},\frac{|x-p|^2}{4})$, i.e., the Gaussian shrinker up to a translation.
\end{cor}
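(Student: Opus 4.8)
The plan is to read off this corollary from the main Theorem \ref{teop} (equivalently, the $s=0$ case of Theorem \ref{teog}) by feeding in the single scalar identity $f(p)-\mu_g=R_g(p)$, which holds precisely because $p$ is a critical point of $f$. First I would invoke Theorem \ref{teop} to record
$$\frac{1}{4}W_{g_0}^2(\overline{\nu},\gamma)\leq \alpha(n,a,b)\,e^{f(p)-\mu_g}+f(p)-\mu_g,$$
together with its rigidity conclusion; all the analytic content of the estimate is already contained here.

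Next I would use the hypothesis that $f$ attains its global minimum at the interior point $p$, so that $\nabla f(p)=0$ and hence $|\nabla f|^2(p)=0$. Since the normalization $\int_M(4\pi)^{-n/2}e^{-f}dv_g=1$ means $V_f(M)=1$, Remark \ref{rentropy} with $\tau=1$ gives the pointwise identity
$$R_g+|\nabla f|^2=f-\mu_g$$
on $M$; evaluating at $p$ and using $|\nabla f|^2(p)=0$ produces $R_g(p)=f(p)-\mu_g$. Substituting this into the displayed estimate replaces each occurrence of $f(p)-\mu_g$ by $R_g(p)$ and yields exactly
$$\frac{1}{4}W_{g_0}^2(\overline{\nu},\gamma)\leq \alpha(n,a,b)\,e^{R_g(p)}+R_g(p).$$

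For the rigidity statement I would argue that the corollary's inequality and that of Theorem \ref{teop} are the very same numerical inequality, merely written with $f(p)-\mu_g$ replaced by its equal $R_g(p)$. Hence equality here is equivalent to equality in Theorem \ref{teop}, and the rigidity established there forces $a=b=0$ and $(M^n,g,f)=(\mathbb{R}^n,g_e,\tfrac{|x-p|^2}{4})$, the Gaussian shrinker up to translation. There is no genuine obstacle in this argument: the substance lies in the main theorem, and the only point demanding a line of care is confirming $\nabla f(p)=0$ at the interior minimum and the clean passage through Remark \ref{rentropy} under the normalization $V_f(M)=1$.
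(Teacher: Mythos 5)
Your proposal is correct and follows exactly the paper's route: the paper likewise deduces the corollary from Theorem \ref{teop} (the $s=0$ case of Theorem \ref{teog}) via Remark \ref{rentropy}, using $\nabla f(p)=0$ at the global minimum and the normalization $V_f(M)=1$ to get $f(p)-\mu_g=R_g(p)$, with the rigidity statement inherited verbatim since the two inequalities are numerically identical.
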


\begin{bibdiv}
\begin{biblist}

\bib{BE}{article}{
   author={Bakry, D.},
   author={\'{E}mery, Michel},
   title={Diffusions hypercontractives},
   language={French},
   conference={
      title={S\'{e}minaire de probabilit\'{e}s, XIX, 1983/84},
   },
   book={
      series={Lecture Notes in Math.},
      volume={1123},
      publisher={Springer, Berlin},
   },
   isbn={3-540-15230-X},
   date={1985},
   pages={177--206},
   review={\MR{0889476}},
   doi={10.1007/BFb0075847},
}

\bib{ccz}{article}{
   author={Cao, Huai-Dong},
   author={Chen, Bing-Long},
   author={Zhu, Xi-Ping},
   title={Recent developments on Hamilton's Ricci flow},
   conference={
      title={Surveys in differential geometry. Vol. XII. Geometric flows},
   },
   book={
      series={Surv. Differ. Geom.},
      volume={12},
      publisher={Int. Press, Somerville, MA},
   },
   isbn={978-1-57146-118-6},
   date={2008},
   pages={47--112},
   review={\MR{2488948}},
   doi={10.4310/SDG.2007.v12.n1.a3},
}

\bib{CRZ-dim4}{article}{
   author={Cao, Huai-Dong},
   author={Ribeiro, Ernani, Jr.},
   author={Zhou, Detang},
   title={Four-dimensional complete gradient shrinking Ricci solitons},
   journal={J. Reine Angew. Math.},
   volume={778},
   date={2021},
   pages={127--144},
   issn={0075-4102},
   review={\MR{4308617}},
   doi={10.1515/crelle-2021-0026},
}

\bib{caozhou}{article}{
   author={Cao, Huai-Dong},
   author={Zhou, Detang},
   title={On complete gradient shrinking Ricci solitons},
   journal={J. Differential Geom.},
   volume={85},
   date={2010},
   number={2},
   pages={175--185},
   issn={0022-040X},
   review={\MR{2732975}},
}

\bib{cda1}{article}{
   author={Cao, Xiaodong},
   author={Wang, Biao},
   author={Zhang, Zhou},
   title={On locally conformally flat gradient shrinking Ricci solitons},
   journal={Commun. Contemp. Math.},
   volume={13},
   date={2011},
   number={2},
   pages={269--282},
   issn={0219-1997},
   review={\MR{2794486}},
   doi={10.1142/S0219199711004191},
}

\bib{carrillo}{article}{
  author={Carrillo, Jose},
  author={Ni, Lei},
  title={Sharp logarithmic Sobolev inequalities on gradient solitons and applications},
  journal = {ArXiv Mathematics e-prints},
  volume={math/0806.2417v2},
  date={2008}, 
}

\bib{chen}{article}{
   author={Chen, Bing-Long},
   title={Strong uniqueness of the Ricci flow},
   journal={J. Differential Geom.},
   volume={82},
   date={2009},
   number={2},
   pages={363--382},
   issn={0022-040X},
   review={\MR{2520796}},
}

\bib{exd}{article}{
   author={Cheng, Xu},
   author={Ribeiro, Ernani, Jr.},
   author={Zhou, Detang},
   title={Volume growth estimates for Ricci solitons and quasi-Einstein
   manifolds},
   journal={J. Geom. Anal.},
   volume={32},
   date={2022},
   number={2},
   pages={Paper No. 62, 18},
   issn={1050-6926},
   review={\MR{4360441}},
   doi={10.1007/s12220-021-00825-8},
}

\bib{XCDZ}{article}{
   author={Cheng, Xu},
   author={Zhou, Detang},
   title={Eigenvalues of the drifted Laplacian on complete metric measure
   spaces},
   journal={Commun. Contemp. Math.},
   volume={19},
   date={2017},
   number={1},
   pages={1650001, 17},
   issn={0219-1997},
   review={\MR{3575913}},
   doi={10.1142/S0219199716500012},
}

\bib{3dim4}{article}{
  author={Cheng, Xu},
  author={Zhou, Detang},
  title={Rigidity of four-dimensional gradient shrinking Ricci solitons},
  journal = {ArXiv Mathematics e-prints},
  volume={math/2105.10744},
  date={2021}, 
}

\bib{CM}{article}{
  author={Colding, Tobias H.},
  author={Minicozzi II, William P.},
  title={Singularities of Ricci flow and diffeomorphisms},
  journal = {ArXiv Mathematics e-prints},
  volume={math/2109.06240},
  date={2022}, 
}

\bib{cda2}{article}{
   author={Eminenti, Manolo},
   author={La Nave, Gabriele},
   author={Mantegazza, Carlo},
   title={Ricci solitons: the equation point of view},
   journal={Manuscripta Math.},
   volume={127},
   date={2008},
   number={3},
   pages={345--367},
   issn={0025-2611},
   review={\MR{2448435}},
   doi={10.1007/s00229-008-0210-y},
}

\bib{cda3}{article}{
   author={Fern\'{a}ndez-L\'{o}pez, Manuel},
   author={Garc\'{\i}a-R\'{\i}o, Eduardo},
   title={Rigidity of shrinking Ricci solitons},
   journal={Math. Z.},
   volume={269},
   date={2011},
   number={1-2},
   pages={461--466},
   issn={0025-5874},
   review={\MR{2836079}},
   doi={10.1007/s00209-010-0745-y},
}

\bib{2dim4}{article}{
   author={Fern\'{a}ndez-L\'{o}pez, Manuel},
   author={Garc\'{\i}a-R\'{\i}o, Eduardo},
   title={On gradient Ricci solitons with constant scalar curvature},
   journal={Proc. Amer. Math. Soc.},
   volume={144},
   date={2016},
   number={1},
   pages={369--378},
   issn={0002-9939},
   review={\MR{3415603}},
   doi={10.1090/proc/12693},
}

\bib{hamilton}{article}{
   author={Hamilton, Richard S.},
   title={The formation of singularities in the Ricci flow},
   conference={
      title={Surveys in differential geometry, Vol. II},
      address={Cambridge, MA},
      date={1993},
   },
   book={
      publisher={Int. Press, Cambridge, MA},
   },
   isbn={1-57146-027-6},
   date={1995},
   pages={7--136},
   review={\MR{1375255}},
}

\bib{comp1}{article}{
   author={Huisken, Gerhard},
   title={Ricci deformation of the metric on a Riemannian manifold},
   journal={J. Differential Geom.},
   volume={21},
   date={1985},
   number={1},
   pages={47--62},
   issn={0022-040X},
   review={\MR{0806701}},
}

\bib{cda4}{article}{
   author={Kotschwar, Brett},
   title={On rotationally invariant shrinking Ricci solitons},
   journal={Pacific J. Math.},
   volume={236},
   date={2008},
   number={1},
   pages={73--88},
   issn={0030-8730},
   review={\MR{2398988}},
   doi={10.2140/pjm.2008.236.73},
}

\bib{comp2}{article}{
   author={Kr\"{o}ncke, Klaus},
   title={Rigidity and infinitesimal deformability of Ricci solitons},
   journal={J. Geom. Anal.},
   volume={26},
   date={2016},
   number={3},
   pages={1795--1807},
   issn={1050-6926},
   review={\MR{3511458}},
   doi={10.1007/s12220-015-9608-4},
}

\bib{comp3}{article}{
   author={Li, Haozhao},
   author={Li, Yu},
   author={Wang, Bing},
   title={On the structure of Ricci shrinkers},
   journal={J. Funct. Anal.},
   volume={280},
   date={2021},
   number={9},
   pages={Paper No. 108955, 75},
   issn={0022-1236},
   review={\MR{4220743}},
   doi={10.1016/j.jfa.2021.108955},
}

\bib{li}{book}{
   author={Li, Peter},
   title={Geometric analysis},
   series={Cambridge Studies in Advanced Mathematics},
   volume={134},
   publisher={Cambridge University Press, Cambridge},
   date={2012},
   pages={x+406},
   isbn={978-1-107-02064-1},
   review={\MR{2962229}},
   doi={10.1017/CBO9781139105798},
}

\bib{lw}{article}{
  author={Li, Yu},
  author={Wang, Bing},
  title={Rigidity of the round cylinders in Ricci shrinkers},
  journal = {ArXiv Mathematics e-prints},
  volume={math/2108.03622},
  date={2023}, 
}

\bib{comp4}{article}{
   author={Li, Yu},
   author={Zhang, Wenjia},
   title={Rigidity of complex projective spaces in Ricci shrinkers},
   journal={Calc. Var. Partial Differential Equations},
   volume={62},
   date={2023},
   number={6},
   pages={Paper No. 171},
   issn={0944-2669},
   review={\MR{4599591}},
   doi={10.1007/s00526-023-02511-9},
}

\bib{cda5}{article}{
   author={Munteanu, Ovidiu},
   author={Sesum, Natasa},
   title={On gradient Ricci solitons},
   journal={J. Geom. Anal.},
   volume={23},
   date={2013},
   number={2},
   pages={539--561},
   issn={1050-6926},
   review={\MR{3023848}},
   doi={10.1007/s12220-011-9252-6},
}

\bib{munteanu}{article}{
   author={Munteanu, Ovidiu},
   author={Wang, Jiaping},
   title={Topology of K\"{a}hler Ricci solitons},
   journal={J. Differential Geom.},
   volume={100},
   date={2015},
   number={1},
   pages={109--128},
   issn={0022-040X},
   review={\MR{3326575}},
}

\bib{naber}{article}{
   author={Naber, Aaron},
   title={Noncompact shrinking four solitons with nonnegative curvature},
   journal={J. Reine Angew. Math.},
   volume={645},
   date={2010},
   pages={125--153},
   issn={0075-4102},
   review={\MR{2673425}},
   doi={10.1515/CRELLE.2010.062},
}

\bib{ni}{article}{
   author={Ni, Lei},
   author={Wallach, Nolan},
   title={On a classification of gradient shrinking solitons},
   journal={Math. Res. Lett.},
   volume={15},
   date={2008},
   number={5},
   pages={941--955},
   issn={1073-2780},
   review={\MR{2443993}},
   doi={10.4310/MRL.2008.v15.n5.a9},
}

\bib{ov}{article}{
   author={Otto, F.},
   author={Villani, C.},
   title={Generalization of an inequality by Talagrand and links with the
   logarithmic Sobolev inequality},
   journal={J. Funct. Anal.},
   volume={173},
   date={2000},
   number={2},
   pages={361--400},
   issn={0022-1236},
   review={\MR{1760620}},
   doi={10.1006/jfan.1999.3557},
}

\bib{1dim4}{article}{
   author={Petersen, Peter},
   author={Wylie, William},
   title={Rigidity of gradient Ricci solitons},
   journal={Pacific J. Math.},
   volume={241},
   date={2009},
   number={2},
   pages={329--345},
   issn={0030-8730},
   review={\MR{2507581}},
   doi={10.2140/pjm.2009.241.329},
}

\bib{cda7}{article}{
   author={Petersen, Peter},
   author={Wylie, William},
   title={On the classification of gradient Ricci solitons},
   journal={Geom. Topol.},
   volume={14},
   date={2010},
   number={4},
   pages={2277--2300},
   issn={1465-3060},
   review={\MR{2740647}},
   doi={10.2140/gt.2010.14.2277},
}

\bib{prs}{article}{
   author={Pigola, Stefano},
   author={Rimoldi, Michele},
   author={Setti, Alberto G.},
   title={Remarks on non-compact gradient Ricci solitons},
   journal={Math. Z.},
   volume={268},
   date={2011},
   number={3-4},
   pages={777--790},
   issn={0025-5874},
   review={\MR{2818729}},
   doi={10.1007/s00209-010-0695-4},
}

\bib{comp5}{article}{
  author={Sun, Ao},
  author={Zhu, Jonathan J.},
  title={Rigidity of spherical product Ricci solitons},
  journal = {ArXiv Mathematics e-prints},
  volume={math/2108.02326},
  date={2023}, 
}

\bib{villani}{book}{
   author={Villani, C\'{e}dric},
   title={Optimal transport},
   series={Grundlehren der mathematischen Wissenschaften [Fundamental
   Principles of Mathematical Sciences]},
   volume={338},
   note={Old and new},
   publisher={Springer-Verlag, Berlin},
   date={2009},
   pages={xxii+973},
   isbn={978-3-540-71049-3},
   review={\MR{2459454}},
   doi={10.1007/978-3-540-71050-9},
}

\bib{ww}{article}{
   author={Wei, Guofang},
   author={Wylie, Will},
   title={Comparison geometry for the Bakry-Emery Ricci tensor},
   journal={J. Differential Geom.},
   volume={83},
   date={2009},
   number={2},
   pages={377--405},
   issn={0022-040X},
   review={\MR{2577473}},
   doi={10.4310/jdg/1261495336},
}

\bib{y1}{article}{
   author={Yokota, Takumi},
   title={Perelman's reduced volume and a gap theorem for the Ricci flow},
   journal={Comm. Anal. Geom.},
   volume={17},
   date={2009},
   number={2},
   pages={227--263},
   issn={1019-8385},
   review={\MR{2520908}},
   doi={10.4310/CAG.2009.v17.n2.a3},
}

\bib{y2}{article}{
   author={Yokota, Takumi},
   title={Addendum to `Perelman's reduced volume and a gap theorem for the
   Ricci flow' [MR2520908]},
   journal={Comm. Anal. Geom.},
   volume={20},
   date={2012},
   number={5},
   pages={949--955},
   issn={1019-8385},
   review={\MR{3053617}},
   doi={10.4310/CAG.2012.v20.n5.a2},
}

\bib{cda8}{article}{
   author={Zhang, Zhu-Hong},
   title={Gradient shrinking solitons with vanishing Weyl tensor},
   journal={Pacific J. Math.},
   volume={242},
   date={2009},
   number={1},
   pages={189--200},
   issn={0030-8730},
   review={\MR{2525510}},
   doi={10.2140/pjm.2009.242.189},
}

\end{biblist}
\end{bibdiv}

\end{document}